\newcolumntype{=}{D{=}{=}{-1}}
\DeclareMathOperator{\E}{\boldsymbol{\mathbb{E}}}
\DeclareMathOperator{\tr}{tr}
\DeclareMathOperator{\re}{Re}
\renewcommand{\today}{\begingroup
\number \day\space  \ifcase \month \or January\or February\or
March\or April\or May\or June\or July\or August\or September\or
October\or November\or December\fi \space  \number \year \endgroup}
\newcommand{\mat}[1]{\mathbf{#1}}
\newcommand{\D}{\mathsf{D}}
\newcommand{\Hessian}{\mathsf{H}}
\newcommand{\bA}{{\mat A}}
\newcommand{\bH}{{\bf H}}
\newcommand{\bI}{{\bf I}}
\newcommand{\ba}{{\bm a}}
\newcommand{\bb}{{\bm b}}
\newcommand{\bc}{{\bm c}}
\newcommand{\bC}{{\bf C}}
\newcommand{\be}{{\bm e}}
\newcommand{\bbf}{{\bm f}}
\newcommand{\bg}{{\bm g}}
\newcommand{\bj}{{\bm j}}
\newcommand{\bbm}{{\bm m}}
\newcommand{\bt}{{\bm t}}
\newcommand{\bu}{{\bm u}}
\newcommand{\bv}{{\bm v}}
\newcommand{\bx}{{\bm x}}
\newcommand{\by}{{\bm y}}
\newcommand{\bz}{{\bm z}}
\newcommand{\bB}{{\bf B}}
\newcommand{\bX}{{\bf X}}
\newcommand{\bmu}{{\bm \mu}}
\renewcommand{\S}{\bm{\mathcal S}}
\newcommand{\K}{{\bf K}}
\newcommand{\bSigma}{{\bm \Sigma}}
\newcommand{\diff}{\mathsf{d}}
\renewcommand{\vec}{\operatorname{vec}}
\newenvironment{bbmatrix}{\left[\begin{array}{c}}{\end{array}\right]}
\theoremstyle{plain}
\newtheorem{lemma}{Lemma}
\newtheorem{theorem}{Theorem}
\title{{Higher order differential analysis with vectorized derivatives}}
\author{Jos\'e E. Chac\'on\footnote{Departamento de
Matem\'aticas, Universidad de Extremadura, E-06006 Badajoz, Spain. E-mail:
{\tt jechacon@unex.es}} \and Tarn Duong\footnote{F-75000 Paris, France. Email:
{\tt tarn.duong@gmail.com}}}
\begin{document}

\maketitle

\begin{abstract}
\noindent Higher order derivatives of functions of vector variables are structured high dimensional objects which lend themselves to many alternative representations, with the most popular being multi-index, matrix and tensor representations. The choice between them depends on the desired analysis since each presents its own advantages and disadvantages. In this paper, we highlight a vectorized representation, in which higher order derivatives are expressed as vectors. This structure allows us to construct an elegant and rigorous algebra of vector-valued functions of vector variables, which would be unwieldy, if not impossible, to do so using the other representations. The fundamental results that we establish for this algebra of differentials are the identification theorems, with concise existence and uniqueness properties, between differentials and derivatives of an arbitrary order. From these fundamental identifications, we develop further analytic tools, including a Leibniz rule for the product of functions, and a chain rule (Fa\`a di Bruno's formula) for the composition of functions. {We then exhibit novel expressions for the higher order derivatives of some important functions, namely the vector/matrix monomial, matrix trace, matrix inverse and matrix determinant.} To complete our exposition, we illustrate how existing results (such as Taylor's theorem) can be incorporated into and generalized within our framework of higher order differential calculus.
\end{abstract}

\medskip
\noindent {\it Keywords:} Fa\`{a} di Bruno's chain rule, identification, Leibniz product rule, matrix determinant, matrix inverse, Taylor approximation, vector-valued function.

\smallskip
\noindent {\it MSC2020 codes:} 
26B05, 41A52, 41A63, 41A10, 41A58

\newpage

\section{Introduction}

Classical references on matrix differential calculus, including \citet{Sea82}, \citet{Bas83} and \citet{Gra83}, represent higher order derivatives with the multi-index notation. Whilst multi-indices allow for the concise statement of these derivatives, they lack important algebraic properties that facilitate the generalization of certain well-known fundamental results in real differential calculus. {Vector- and matrix-based derivatives possess some of these desired algebraic properties, and they are thoroughly exposited in the seminal monographs of \citet{Sch17} and \citet{MN19}, and exploited to illustrate its applications regarding linear models, maximum likelihood and principal component analysis. This firmly establishes matrix analysis in the set of analytic tools for advanced statistical methodologies beyond the archetypal cases of ordinary least squares or simple linear regression. Other important contributions are included in \citet{W02}, where the usefulness of vector differential calculus is illustrated for generalized linear models, kriging, and several types of regression models.} 

The aforementioned monographs focus on the first and second-order derivatives, mainly because these are the most important to tackle initially, but also due to a lack of a unifying framework to treat higher order derivatives. However, two recent books by \citet{T13} and \citet{T21} highlight the necessity of developing such a framework for higher order analysis. {In this paper, we focus on an alternative representation as vectorized derivatives, introduced by \citet{Hol96a}, which do possess the full range of desired algebraic properties. Consequently, we are able to fill in many of the gaps in the currently sparse suite of analytical tools for higher order derivatives of vector-valued functions of vector variables. Thus we assert that vectorized dervatives are a viable alternative to the more well-established formulations.}

We begin our motivation by observing that the derivatives of a scalar-valued function of a vector variable $f\colon\mathbb R^d\to\mathbb R$ are intimately linked to the task of approximating such a function in a neighborhood of a given point ${\bc}\in\mathbb R^d$ \citep{Mag10}.  The almost universally adopted definition of differentiability is as follows. The function $f$ is said to be differentiable at ${\bc}$ if there exists a linear function $\mathsf{d}f(\bc;\cdot)\colon\mathbb R^d\to\mathbb R$ such that, for all $\bm u\in\mathbb R^d$,
\begin{equation}\label{eq:dif1}
f({\bc}+{\bm u})=f(\bc)+\mathsf{d}f(\bc;\bm u)+\re_{\bc}(\bm u)
\end{equation}
with the remainder satisfying $\re_{\bc}(\bm u)/\|\bm u\|\to0$ as $\bm u\to{\bm 0}$, where $\|\bm u\|=(\bm u^\top\bm u)^{1/2}$ denotes the norm of $\bm u$. See, for example, \citet[][p.~67]{E73}. In this case, the function $\mathsf{d}f(\bc;\cdot)$ is known as the differential of $f$ at $\bc$, and the so-called first identification theorem \citep[][Section~5.8]{MN19} shows that $\mathsf{d}f(\bc;\bm u)=\D f(\bc)^\top\bm u$, where the vector $\D f(\bc)\in\mathbb R^d$ is the gradient (or first derivative) of $f$ at $\bc$. This vector is unique, in the sense that if there is another $\bm a\in\mathbb R^d$ such that $\mathsf{d}f(\bc;\bm u)={\bm a}^\top\bm u$ for all $\bm u\in\mathbb R^d$, then necessarily $\bm a=\D f(\bc)$. Moreover, omitting the remainder terms of Equation~\eqref{eq:dif1}, $f({\bc}+{\bm u})\approx f(\bc)+\D f(\bc)^\top\bm u$ is the best linear (or first-order) approximation of $f$ in a neighborhood of $\bc$.

Analogously, if $f$ is differentiable at every point in some neighborhood of $\bc$, and all its partial derivatives are also differentiable at $\bc$, then $f$ is said to be twice differentiable at $\bc$. In this case, then there exists a quadratic form $\diff^2f(\bc;\cdot)\colon\mathbb R^d\to\mathbb R$, called the second differential of $f$ at ${\bc}$, such that
\begin{equation}\label{eq:dif2}
f({\bc}+{\bm u})=f(\bc)+\mathsf{d}f(\bc;\bm u)+\tfrac12\diff^2f(\bc;\bm u)+\re_{\bc}(\bm u)
\end{equation}
with $\re_{\bc}(\bm u)/\|\bm u\|^2\to0$ as $\bm u\to{\bm 0}$. The second identification theorem \citep[][Section 6.8]{MN19} states that $\mathsf{d}^2f(\bc;\bm u)={\bm u}^\top\Hessian f(\bc)\bm u$, where the symmetric $d\times d$ matrix $\Hessian f(\bc)$
is called the Hessian matrix (or second derivative) of $f$ at $\bc$. In this case, uniqueness is guaranteed up to symmetrization, in the sense that if $\bB$ is another matrix such that $\mathsf{d}^2f(\bc;\bm u)={\bm u}^\top\bB\bm u$ for all $\bm u\in\mathbb R^d$, then necessarily $\Hessian f(\bc)=(\bB+\bB^\top)/2$. Omitting the remainder terms of Equation~\eqref{eq:dif2},
$f({\bc}+{\bm u})\approx f(\bc)+\D f(\bc)^\top\bm u+\tfrac12 \bu^\top\Hessian f(\bc)\bu$ is the best quadratic (or second-order) approximation of $f$ in a neighborhood of $\bc$.

First- and second-order Taylor approximations, along with their identification theorems, are well known. Indeed, for most applications a second-order approximation suffices, and so, formal generalizations of Equations~\eqref{eq:dif1} and \eqref{eq:dif2} to higher order degrees are often sparsely treated or even omitted in standard textbooks.
Nonetheless, higher order Taylor approximations are well-established research tools, and it would be erroneous to suggest that higher order derivatives and higher order approximations of functions are mere intellectual curiosities, as there are numerous applicative contexts where they are required.
For instance, third- and fourth-order approximations are involved in the analysis of local quadratic regression \citep[][Section~3]{RW94}, fourth- and sixth-order derivatives appear in the expansion of the bias and variance of density curvature matrix estimators \citep{CD10}, and more generally, expansions of distribution and density functions (e.g. Edgeworth expansions) are defined for an arbitrary order depending on the regularity of the distributional moments \citep[][Section~3.2]{KvR05}. In this sense, the recent publication of \citet{JTT21} on multivariate skewness and kurtosis highlights the current and ongoing interest in employing vectorized derivatives for higher order analysis. Furthermore, up to eighth-order derivatives are required in the analysis of bandwidth matrices for nonparametric kernel smoothers \citep{CD18}.

A widespread and elegant way to express these higher order Taylor approximations is via multi-index notation. Let $\bbm=(m_1,\dots,m_d)$ be a multi-index, that is, a vector of non-negative integers. Denote its modulus as $|\bbm|=\sum_{i=1}^d m_i$, its generalized factorial as $\bbm!=\prod_{i=1}^d m_i!$, the element-wise exponentiation of $\bx=(x_1,\dots,x_d)$ as $\bx^{\bbm}=\prod_{i=1}^dx_i^{m_i}$, and its induced partial derivative of $f$ at $\bc$ as $\partial^{\bbm}f(\bc)= (\partial^{|\bbm|}/\partial x_1^{m_1}\cdots\partial x_d^{m_d}) f(\bc)$. For $r\geq 1$, the recursive definition of the $r$-order differentiability of $f$ states that $f$ is $r$ times differentiable at $\bc$, if it is $(r-1)$ times differentiable in a neighborhood of $\bc$ and all the partial derivatives $\partial^{\bbm}f$, with $|\bbm|=r-1$, are differentiable at $\bc$. In this case, we are able to write the $r$th order Taylor approximation of $f$ as
\begin{equation}\label{eq:difr}
f(\bc+\bm u)=\sum_{|\bbm|\leq r}\frac{\partial^{\bbm}f(\bc)}{\bbm!}\bm u^{\bbm}+\re_{\bc}(\bm u)
\end{equation}
with $\re_{\bc}(\bm u)/\|\bm u\|^r\to0$ as $\bm u\to{\bm 0}$. See \cite{AGV90} for a thorough discussion about the minimal assumptions necessary for Equation~\eqref{eq:difr} to hold.

Whilst multi-indices offer a concise expression of higher order Taylor approximations, they possess several disadvantages: (i) the Taylor approximation terms, expressed as a multi-index summation, lack of an algebraic representation as a vector, a matrix or any other mathematical object which constitutes a basis for an algebra of differentials, (ii) the concision is not maintained if the infinitesimal element $\bm u$ has the form $\bm u={\bf U}\bz$ for a fixed vector $\bz$ and an infinitesimal matrix $\bf U$, and (iii) perhaps most importantly, there are no results that guarantee the uniqueness of these expressions. We assert that, on the other hand, vectorized derivatives, due to their uniqueness and algebraic properties, are indeed a feasible candidate upon which to build a differential analysis framework.

In Section~\ref{sec:prelim}, we exhibit the mathematical preliminaries required for our investigations into vectorized higher order derivatives. In Section~\ref{sec:ident}, our main results for the identification of differentials and derivatives for scalar- and  vector-valued functions are presented. In Section~\ref{sec:chain}, we extend the basic identification results to rules for the derivatives of the product and the composition of functions. In Section~\ref{sec:example}, we provide concrete examples to illustrate the results from the two previous sections. In Section~\ref{sec:connect}, the connections with some existing results are elaborated. We end with some concluding remarks.

\section{Mathematical preliminaries} \label{sec:prelim}

In the exploration of the broader question of the appropriate form of higher order derivatives of matrix-valued functions, \citet{Mag10} reiterates compelling reasons to define the derivatives of vector-valued functions as a matrix, over alternative forms as a tensor or as a vector, according to the examples presented. Nonetheless, \citet{Pol85} asserts the advantages of the tensor form and \citet{Hol96a} of the vectorized form in other situations.

Whilst the form of the derivative may appear to be an inconsequential theoretical detail, it turns out that the matrix/tensor form of the derivative was one of the key obstacles to solving some important applicative problems; for example, the expression of explicit formulas for moments of arbitrary order of the multivariate Gaussian distribution \citep{Hol88} or the  analysis of general kernel smoothers \citep{CD18} were solved using vectorized derivatives.

Our proposed approach for the analysis of higher order derivatives combines the vectorized form of \citet{Hol96a} with the differential/derivative identification espoused by \citet{MN19}.
We begin with a definition of the required notations for our framework.
For a matrix $\mat A$, denote the $r$th Kronecker power of $\mat A$ as $$\mat A^{\otimes r}=\bigotimes_{i=1}^r\mat A= \overbrace{\mat A\otimes{\cdots}\otimes
\mat A}^{r \ {\rm matrices}}.$$
If $\mat A\in\mathcal M_{m\times n}$ (i.e., $\mat A$ is
a matrix of order $m\times n$) then $\mat A^{\otimes r}\in\mathcal
M_{m^r\times n^r}$; we adopt the convention $\mat A^{\otimes 1}=\mat A$ and $\mat A^{\otimes 0}=1\in\mathbb R$. We also adopt the convention that a vector $\bx = (x_1,\dots, x_d)$ is a column vector. So the derivative operator with respect to the free vector variable $\bx$
is denoted as
$$\D =\frac{\partial}{\partial \bx} = \begin{bmatrix} \dfrac{\partial}{\partial x_1} \\ \vdots \\ \dfrac{\partial}{\partial x_d} \end{bmatrix}$$
and it is a column vector like $\bx$.

Let $f\colon\mathbb R^d\to\mathbb R$ be a scalar-valued function of a $d$-dimensional vector
variable. For an arbitrary non-negative integer $r$, we  consider the object $\D^{\otimes r}f(\bx)\in\mathbb R^{d^r}$ as the $r$th derivative of $f$ at $\bx$. This is a vector containing all the partial derivatives of order $r$ of $f$ at
$\bx$, arranged in a convenient layout as defined by the formal Kronecker power of $\D$.
By the `formal Kronecker power', we mean the product of the differential operator with itself which is obtained using the common notational convention that
$(\partial/\partial x_i) (\partial/\partial x_j) = \partial^2/(\partial x_i \partial x_j) = \partial^2/(\partial x_j \partial x_i)= (\partial/\partial x_j) (\partial/\partial x_i)  $
for all $i,j$. {This commutativity is always guaranteed for a sufficiently regular $f$.} Then we are able to  write formally $$\D^{\otimes r}f (\bx) = \frac{\partial^r f(\bx)}{(\partial\bx)^{\otimes r}}.$$
Hence, the $r$th derivative of $f$ is represented as a vector of length $d^r$,
and not an $r$-fold tensor array or a matrix.

The gradient of $f$ is $\D^{\otimes 1}f =\D f$ so there is no change from the usual derivative here. To observe a difference, we compute explicitly the second derivative. The vectorized Hessian operator is
$$
\D^{\otimes 2} = \frac{\partial^2}{(\partial\bx)^{\otimes 2}}
= \begin{bmatrix} \dfrac{\partial}{\partial x_1} \\ \vdots \\ \dfrac{\partial}{\partial x_d} \end{bmatrix} \otimes \begin{bmatrix} \dfrac{\partial}{\partial x_1} \\ \vdots \\ \dfrac{\partial}{\partial x_d} \end{bmatrix}
= \begin{bmatrix} \dfrac{\partial^2}{\partial x_1^2} \\ \vdots\\ \dfrac{\partial^2}{\partial x_1 \partial x_d} \\ \vdots \\ \dfrac{\partial^2}{\partial x_d \partial x_1} \\ \vdots \\ \dfrac{\partial^2}{\partial x_d^2} \end{bmatrix} =
\begin{bmatrix} \dfrac{\partial^2}{\partial x_1^2} \\ \vdots\\ \dfrac{\partial^2}{\partial x_d \partial x_1} \\ \vdots \\ \dfrac{\partial^2}{\partial x_1 \partial x_d} \\ \vdots \\ \dfrac{\partial^2}{\partial x_d^2} \end{bmatrix},
$$
whereas the usual Hessian operator is
\begin{align*}
\Hessian = \dfrac{\partial^2}{ \partial \bx \partial \bx^\top} = \D \D^\top
&= \begin{bmatrix} \dfrac{\partial^2}{\partial x_1^2} & \dots & \dfrac{\partial^2}{\partial x_1 \partial x_d} \\  \vdots & & \vdots \\ \dfrac{\partial^2}{\partial x_d \partial x_1} & \dots & \dfrac{\partial^2}{\partial x_d^2}\end{bmatrix}
 = \begin{bmatrix} \dfrac{\partial^2}{\partial x_1^2} & \dots & \dfrac{\partial^2}{\partial x_1 \partial x_d} \\  \vdots & & \vdots \\ \dfrac{\partial^2}{\partial x_1 \partial x_d} & \dots & \dfrac{\partial^2}{\partial x_d^2}\end{bmatrix}.
\end{align*}
Therefore the Hessian $\Hessian f$ is such that $\vec\Hessian f =\D^{\otimes 2}f$, where the vec operator transforms a matrix into a vector by stacking its columns underneath each other.

A vectorized form can also be used to express the derivatives of a vector-valued function. If $\bbf \colon \mathbb R^d\to\mathbb R^p$ is a vector-valued function of a vector variable with components $\bbf =(f_1,\dots,f_p)$, then we formally write the $r$th derivative of $\bbf$ at $\bx$ as
$$\D^{\otimes r} \bbf(\bx) = \begin{bbmatrix} \D^{\otimes r} f_1(\bx) \\ \hline \vdots \\ \hline\D^{\otimes r}f_p(\bx)\end{bbmatrix}.$$
Thus $\D^{\otimes r} \bbf$ is a $pd^r$-vector, i.e., we also arrange all partial derivatives in a vector form. For $r=1$, this can be compared to a more traditional, matrix layout: the usual Jacobian matrix ${\mathsf J} \bbf$ is an arrangement of the gradients of the component functions where
$${\mathsf J} \bbf (\bx) =\begin{bbmatrix} \D^\top f_1(\bx) \\ \hline \vdots \\ \hline\D^\top f_p(\bx)\end{bbmatrix}\in \mathcal{M}_{p\times d}.$$
This implies that the first vectorized derivative of $\bbf$, as a column vector of stacked gradient functions, satisfies $\D \bbf = \vec {\mathsf J}^\top \bbf$, echoing the relationship between the vectorized second  derivative of a scalar-valued function with its Hessian matrix, $\D^{\otimes 2} f = \vec \Hessian^\top f = \vec \Hessian f $.

If we restrict ourselves to examining the first and second derivatives, then there is little gain with the vectorized formulation over the traditional formulation of treating the gradient as a vector/matrix and the Hessian as a matrix.
At first glance this configuration of vectorized derivatives may even appear to be a counter-productive arrangement since it breaks the structure of matrix/tensor form of the derivative by rearranging them into a vector. On the other hand, vectorization ensures that we can proceed from the first to the second and to subsequent derivatives without having to change from vector to matrix to tensor. Moreover, for an $r$-times vector-valued differentiable function $\bbf$, vectorization leads to an intuitive, iterative formula for the evaluation of an increment in the derivative order as
$$
\D^{\otimes r} \bbf=\D(\D^{\otimes r-1}\bbf).
$$
This internal consistency affords us many conceptual simplifications which facilitate important advances in higher order differential analysis, which were not able to be treated using the multi-index representations of higher order derivatives.  The most fundamental of these is the existence and uniqueness of the identification between differentials and derivatives.

\section{Identification theorems for higher order differentials} \label{sec:ident}

\subsection{Scalar-valued functions}

We begin with a scalar-valued function $f\colon\mathbb R^d\to\mathbb R$, which we suppose to be $r$-times differentiable at $\bc\in\mathbb R^d$. A common compact notation for its $k$th order partial derivatives is
$$\mathsf D^k_{i_1\cdots i_k}f(\bc)=\frac{\partial^k}{\partial x_{i_1}\cdots\partial x_{i_k}}f(\bc)$$
for any $k\leq r$ and $i_1,\dots,i_k\in\{1,\dots,d\}$. Using this, the $r$th order differential of $f$ at $\bc$ can be expressed as a symmetric $r$-linear form:
\begin{equation}\label{drf}
\diff^rf(\bc;\bu)=\sum_{i_1,\dots,i_r=1}^du_{i_1}\cdots u_{i_r}\mathsf D^r_{i_1\cdots i_r}f(\bc)
\end{equation}
for $\bu=(u_1,\dots,u_d)\in\mathbb R^d$ (\citealp[][p.~193]{F80}; \citealp[][p.~389]{Sch17}). This is another instance of the multi-index notation, since we can rewrite Taylor's theorem in Equation~\eqref{eq:difr} as
$$ f(\bc+\bu)=f(\bc)+\sum_{k=1}^r \sum_{i_1,\dots,i_k=1}^d u_{i_1}\cdots u_{i_k}\D^k_{i_1\cdots i_k}f(\bc) + \re_{\bc}(\bu).$$
Nevertheless, each term of the sum in Equation~\eqref{drf} involves multiplying a certain $r$th order partial derivative by the corresponding coordinates of $\bu$, so the whole sum can be more concisely expressed using a vectorized derivative as $\diff^rf(\bc;\bu)=\D^{\otimes r}f(\bc)^\top\bu^{\otimes r}$.

Apart from concision, a further advantage of this vectorized representation is that it allows for more general forms for the infinitesimal element to be treated easily by applying the usual algebraic properties of the Kronecker product. For example, if $\bu={\bf U}\bz$ for a fixed vector $\bz$ and an infinitesimal matrix ${\bf U}$, then
$\diff^rf(\bc;{\bf U}\bz)=\D^{\otimes r}f(\bc)^\top({\bf U}\bz)^{\otimes r}=\D^{\otimes r}f(\bc)^\top{\bf U}^{\otimes r}\bz^{\otimes r}.$
The compactness of this expression, together with the explicit separation of the infinitesimal part from the other components, cannot be achieved by a multi-index representation. The ability to isolate the infinitesimals plays a key role in the development of new differential analysis results, such as the formulating the uniqueness properties of the following identification theorems.

To establish uniqueness, the symmetrizer matrix plays a crucial role. It was introduced by \cite{Hol85} with the aim of obtaining a symmetrization of the Kronecker product. The symmetrizer matrix $\S_{d,r}$ is implicitly defined by the property that, for any choice of $r$ vectors $\bv_1,\dots,\bv_r\in\mathbb R^d$, it holds that $r!\,\S_{d,r}(\bv_1\otimes\cdots\otimes\bv_r)=\sum_{\sigma\in\mathcal P_r}(\bv_{\sigma(1)}\otimes\cdots\otimes\bv_{\sigma(r)}),$ where $\mathcal P_r$ stands for the set of all permutations of $\{1,\dots,r\}$. It makes the Kronecker product symmetric in the sense that $\S_{d,r}(\bv_1\otimes\cdots\otimes\bv_r)=\S_{d,r}(\bv_{\tau(1)}\otimes\cdots\otimes\bv_{\tau(r)})$ for any permutation $\tau\in\mathcal P_r$.
Thus, $\S_{d,r}$ is a matrix of order $d^r\times d^r$ and it has the following explicit form \citep[see, for example,][]{Sch03}
$$\S_{d,r}=\frac1{r!}\sum_{i_1,\dots,i_r=1}^d\;\sum_{\sigma\in\mathcal
P_r}\bigotimes_{\ell=1}^r\be_{i_\ell}\be_{i_{\sigma(\ell)}}^\top=\frac1{r!}\sum_{i_1,\dots,i_r=1}^d\;\Big(\bigotimes_{\ell=1}^r\be_{i_\ell}\Big)\Big\{\sum_{\sigma\in\mathcal
P_r}\Big(\bigotimes_{\ell=1}^r\be_{i_{\sigma(\ell)}}\Big)\Big\}^\top$$
where $\be_i$ is the $i$th column of the $d\times d$ identity matrix $\bI_d$.
This expression reveals that the computation of $\S_{d,r}$ can be a complex and time-consuming task in practice, especially for large values of $d$ and/or $r$, though we note that  \cite{CD15} developed efficient recursive algorithms to alleviate this problem.

Our first main result is an identification theorem for differentials of arbitrary order with respect to the vectorized derivative, with a corresponding level of uniqueness, for a scalar-valued function of a vector variable.

\begin{theorem}[Scalar-valued identification]\label{thm:ident}
Let the function $f\colon\mathbb R^d\to\mathbb R$ be $r$-times differentiable at $\bc$.
\begin{enumerate}[label=(\roman*)]
\item If $\bu \in \mathbb{R}^d$ then the $r$th order differential of $f$ at $\bc$ with increment $\bu$ is given by
$\diff^rf(\bc;\bu)=\mathsf D^{\otimes r}f(\bc)^\top\bu^{\otimes r}$.
\item If $\ba\in\mathbb R^{d^r}$ satisfies $\diff^rf(\bc;\bu)=\ba^\top\bu^{\otimes r}$ for all $\bu\in\mathbb R^d$, then $\mathsf D^{\otimes r}f(\bc)=\S_{d,r}\ba$.
\end{enumerate}
\end{theorem}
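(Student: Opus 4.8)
The plan is to prove part~(i) by a direct expansion in the standard basis, and then to leverage (i) to obtain the uniqueness statement~(ii) through the projection properties of the symmetrizer.

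For part~(i), I would first write $\bu=\sum_{i=1}^d u_i\be_i$ and expand the Kronecker power by multilinearity, so that $\bu^{\otimes r}=\sum_{i_1,\dots,i_r=1}^d u_{i_1}\cdots u_{i_r}(\be_{i_1}\otimes\cdots\otimes\be_{i_r})$. Applying the identical expansion to the formal operator $\D^{\otimes r}$ (writing $\D=\sum_i(\partial/\partial x_i)\be_i$) and letting it act on $f$ gives $\D^{\otimes r}f(\bc)=\sum_{i_1,\dots,i_r=1}^d \mathsf D^r_{i_1\cdots i_r}f(\bc)\,(\be_{i_1}\otimes\cdots\otimes\be_{i_r})$. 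Since the tensors $\be_{i_1}\otimes\cdots\otimes\be_{i_r}$ form the standard orthonormal basis of $\mathbb R^{d^r}$, with $(\be_{i_1}\otimes\cdots\otimes\be_{i_r})^\top(\be_{j_1}\otimes\cdots\otimes\be_{j_r})=\prod_{\ell=1}^r\delta_{i_\ell j_\ell}$, taking the inner product collapses the double sum to $\D^{\otimes r}f(\bc)^\top\bu^{\otimes r}=\sum_{i_1,\dots,i_r=1}^d u_{i_1}\cdots u_{i_r}\mathsf D^r_{i_1\cdots i_r}f(\bc)$, which is precisely the symmetric $r$-linear form of Equation~\eqref{drf}. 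This settles~(i).

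For part~(ii), I would first record two facts. First, setting $\bv_1=\cdots=\bv_r=\bu$ in the defining relation of the symmetrizer yields $\S_{d,r}\bu^{\otimes r}=\bu^{\otimes r}$; combined with the symmetry $\S_{d,r}^\top=\S_{d,r}$ (readily checked from its explicit form via the bijection $\sigma\mapsto\sigma^{-1}$ on $\mathcal P_r$, $\S_{d,r}$ being the orthogonal projector onto the symmetric subspace), this is all I need about $\S_{d,r}$. Second, because $f$ is $r$-times differentiable at $\bc$ its mixed partials of order $r$ are permutation-invariant, so $\D^{\otimes r}f(\bc)$ is itself a symmetric vector and hence $\S_{d,r}\D^{\otimes r}f(\bc)=\D^{\otimes r}f(\bc)$. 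Now, given $\ba$ with $\ba^\top\bu^{\otimes r}=\diff^rf(\bc;\bu)$ for all $\bu$, part~(i) gives $\ba^\top\bu^{\otimes r}=\D^{\otimes r}f(\bc)^\top\bu^{\otimes r}$; inserting $\S_{d,r}\bu^{\otimes r}=\bu^{\otimes r}$ and using the symmetry of $\S_{d,r}$ rewrites the left side as $(\S_{d,r}\ba)^\top\bu^{\otimes r}$, so that $\bg=\S_{d,r}\ba-\D^{\otimes r}f(\bc)$ satisfies $\bg^\top\bu^{\otimes r}=0$ for all $\bu$, with $\bg$ symmetric by the two facts above.

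The hard part will be the concluding step: showing that a symmetric vector $\bg$ annihilating every $\bu^{\otimes r}$ must vanish. I would argue that $\bg^\top\bu^{\otimes r}$ is a homogeneous polynomial of degree $r$ in $u_1,\dots,u_d$, and that the coefficient of the monomial $\bu^{\bbm}$ equals $(r!/\bbm!)$ times the common value of those entries of $\bg$ whose multi-index has type $\bbm$ — these entries being equal precisely because $\bg$ is symmetric. Since a polynomial vanishing identically has all coefficients zero, every such common value is zero, and as each coordinate of $\bg$ is of this form, $\bg=\boldsymbol 0$, i.e. $\S_{d,r}\ba=\D^{\otimes r}f(\bc)$. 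Equivalently, this is the statement that the rank-one symmetric tensors $\bu^{\otimes r}$ span the entire symmetric subspace, so that no nonzero symmetric vector is orthogonal to all of them; I expect this polarization-type fact, together with the need to symmetrize (which is exactly why $\S_{d,r}$, and not the identity, appears in the conclusion and why uniqueness holds only up to symmetrization), to be the only genuinely non-mechanical ingredient of the argument.
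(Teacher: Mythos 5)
Your proof is correct. Part (i) is essentially the paper's own argument: the paper also obtains $\diff^rf(\bc;\bu)=\D^{\otimes r}f(\bc)^\top\bu^{\otimes r}$ by observing that this inner product reproduces the symmetric $r$-linear form $\sum_{i_1,\dots,i_r}u_{i_1}\cdots u_{i_r}\D^r_{i_1\cdots i_r}f(\bc)$ of Equation~\eqref{drf}, which is exactly your basis expansion.

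For part (ii) you take a genuinely different route on the decisive step. The paper reduces (ii) to a standalone lemma (Lemma~\ref{lem:ident-zero}): $\ba^\top\bu^{\otimes r}=0$ for all $\bu$ if and only if $\S_{d,r}\ba=0$, for an \emph{arbitrary} $\ba$, and proves the nontrivial direction by induction on the number $k$ of distinct indices, evaluating the hypothesis at $\bu=\be_{i_1}+\cdots+\be_{i_k}$ and then matching the permutation sum defining $\S_{d,r}$ against the sum over tuples with exactly $k$ distinct entries via a permutations-with-repetition count. You instead symmetrize first, reduce to showing that a \emph{symmetric} vector $\bg$ with $\bg^\top\bu^{\otimes r}\equiv0$ vanishes, and conclude from the linear independence of the monomials $\bu^{\bbm}$ together with the multiplicity count $r!/\bbm!$. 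Both arguments rest on the same underlying fact (the powers $\bu^{\otimes r}$ span the symmetric subspace), but yours outsources the combinatorics to the standard fact that a real polynomial vanishing identically has all coefficients zero, which makes it shorter; the paper's lemma is self-contained and is stated for general, not necessarily symmetric, $\ba$. One point to tighten: the symmetry of $\S_{d,r}\ba$ (needed so that all entries of $\bg$ of a given type share a common value) does not actually follow from the two facts you record ($\S_{d,r}\bu^{\otimes r}=\bu^{\otimes r}$ and $\S_{d,r}^\top=\S_{d,r}$); it follows from the explicit averaging form of $\S_{d,r}$, whose row indexed by $(i_1,\dots,i_r)$ averages the entries of $\ba$ over all permutations of that tuple. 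This is immediate from the formula for $\S_{d,r}$ given in the paper, but it should be invoked explicitly.
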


Although rarely expressed with vectorized derivatives, Theorem~\ref{thm:ident}(i) is already known, as is shown above. For Theorem~\ref{thm:ident}(ii), it suffices to establish that $\ba^\top\bu^{\otimes r}=0$ for all $\bu\in\mathbb R^d$ if and only if $\S_{d,r}\ba=0$. This result, labeled as Lemma~\ref{lem:ident-zero}, is stated and proved in Appendix~\ref{app:ident}.

For $r=1$, since $\S_{d,1}=\bI_d$, Theorem~\ref{thm:ident} agrees almost exactly with the first identification theorem of \citet[p.~96]{MN19} except that the latter identifies the first derivative as a row vector (the Jacobian matrix) rather than a column vector as we do. For $r=2$, Theorem~\ref{thm:ident} agrees with the second identification theorem of \citet[p.~119]{MN19}, except that we express it as vectorized derivative whilst \citeauthor{MN19} express it as a Hessian matrix. These authors state that if $\mat A \in \mathcal{M}_{d\times d}$ satisfies $\diff^2f(\bc;\bu)=\bu^\top\mat A\bu$ then the Hessian matrix is identified as $\mathsf Hf(\bc)=(\mat A+\mat A^\top)/2$. Using Theorem~\ref{thm:ident}, since $\diff^2f(\bc;\bu)=\bu^\top\mat A\bu= \ba^\top\bu^{\otimes 2}$ for $\ba=\vec\bA$, then that yields $\D^{\otimes 2} f(\bc) = \S_{d,2}\ba=\vec \{(\mat A+\mat A^\top)/2\}$, by the properties of the symmetrizer matrix \citep[Example~2.1]{Hol96a}; so exactly the same conclusion is reached, because $\D^{\otimes 2} f(\bc)=\vec\mathsf Hf(\bc)$. Importantly, for $r>2$ \citet{MN19} contains no further identification results for these higher order differentials, whereas Theorem~\ref{thm:ident} is valid for differentials of an arbitrary order.

In terms of uniqueness, since $\S_{d,1}=\bI_d$, it is a strict uniqueness for the first order. On the other hand, it is uniqueness-after-symmetrization (i.e. pre-multiplication by the symmetrizer matrix $\S_{d,r}$) for the second and higher orders. \citeauthor{MN19} assert that the second derivative should be identified from $\diff^2 f(\bc;\bu)= \bu^\top \bA \bu$ with its symmetrized version $(\bA + \bA^\top)/2$, rather than $\bA$ on its own, even though there infinitely many matrices that are different to $\bA$ but which yield the same  symmetrized sum. Their reasoning is equivalent to the pre-multiplication by $\S_{d,2}$ in Theorem~\ref{thm:ident}, since $\S_{d,2}\vec \bA=\vec \{(\mat A+\mat A^\top)/2\}$.

However, for $r>2$ there is no simple sequence of elementary matrix operations that can reproduce the action of the symmetrizer matrix, and the ensuing combinatorial explosion means that keeping track of which mixed partial derivatives are identical by construction, say in the multi-index representation, quickly becomes unwieldy. So the explicit exclusion of the symmetrizer matrix in the previously existing results, based on the behavior for the particular first and second order identification theorems, explains in part the hitherto lack of identification results for higher order differentials.

\subsection{Vector-valued functions}

Our next goal is to extend the identification in Theorem~\ref{thm:ident} to a vector-valued function $\bbf \colon\mathbb R^d \to \mathbb R^p$. Recall that when $\bbf$ has components $(f_1,\dots,f_p)$, the $r$th order differential at $\bc$ is a function $\diff^r \bbf(\bc;\cdot)\colon\mathbb R^d\to\mathbb R^p$ defined as
$\diff^r \bbf(\bc;\cdot)=\big(\diff^r f_1(\bc;\cdot),\dots,\diff^r f_p(\bc;\cdot)\big)$. A component-wise application of Theorem~\ref{thm:ident}(i) yields
$$\diff^r \bbf(\bc;\bu)=
\begin{bbmatrix} (\bu^{\otimes r})^\top \mathsf D^{\otimes r}f_1(\bc)\\ \hline
\vdots \\ \hline
(\bu^{\otimes r})^\top \mathsf D^{\otimes r}f_p(\bc) \end{bbmatrix} =\{ \bI_p \otimes (\bu^\top)^{\otimes r} \} \D^{\otimes r} \bbf(\bc)$$
where the last equality follows from reasoning as in \citet[Section~5.9]{CD18}.
An alternative expression of the $r$th order differential is
$$\diff^r \bbf(\bc;\bu)=\begin{bbmatrix} \mathsf D^{\otimes r}f_1(\bc)^\top\\ \hline
\vdots \\ \hline
\mathsf D^{\otimes r}f_p(\bc)^\top \end{bbmatrix}\bu^{\otimes r}=\{\vec_{d^r,p}^{-1} \D^{\otimes r}\bbf(\bc)\}^\top\bu^{\otimes r},$$
where $\vec_{m,n}^{-1}$ denotes the inverse of the isomorphism $\vec\colon\mathcal M_{m\times n}\to\mathbb R^{mn}$, as evaluated in the following lemma.

\begin{lemma}[Inverse vector operator]\label{lem:invvec}
The inverse of the isomorphism $\vec\colon\mathcal M_{m\times n}\to\mathbb R^{mn}$ is given by $\vec_{m,n}^{-1}(\ba)=\{(\vec^\top\bI_n)\otimes\bI_m\}(\bI_n\otimes \ba)$ for $\ba\in\mathbb R^{mn}$.
\end{lemma}
The proof is in Appendix \ref{app:ident-vec}.
\color{black}
Lemma \ref{lem:invvec} allows us to write further
$$\diff^r \bbf(\bc;\bu)=\{\vec_{d^r,p}^{-1} \D^{\otimes r}\bbf(\bc)\}^\top\bu^{\otimes r}=\{\bI_p\otimes\D^{\otimes r}\bbf(\bc)^\top\}\{(\vec\bI_p)\otimes\bI_{d^r}\}\bu^{\otimes r}.$$
Along with the previous formula,
these three expressions for the $r$th order differential serve different purposes. The first one $\{ \bI_p \otimes (\bu^\top)^{\otimes r} \} \D^{\otimes r} \bbf(\bc)$ is minimal in the sense that it involves the least number of elementary operations. The second one $\{\vec_{d^r,p}^{-1} \D^{\otimes r}\bbf(\bc)\}^\top\bu^{\otimes r}$  separates out the infinitesimal $\bu^{\otimes r}$, and is the most easily identifiable as the generalization of the differential for a scalar function, though this requires the introduction of the inverse vector operator. The third one $\{\bI_p\otimes\D^{\otimes r}\bbf(\bc)^\top\} \{(\vec\bI_p)\otimes\bI_{d^r}\}\bu^{\otimes r}$  is a compromise of these two where a separation of the infinitesimal is attained without the inverse vector operator, but with more involved operations.

\begin{theorem}[Vector-valued identification] \label{thm:ident-vec}
Let the function $\bbf \colon\mathbb R^d \to \mathbb R^p$ be $r$-times differentiable at $\bc$.
\begin{enumerate}[label=(\roman*)]
\item If $\bu \in \mathbb{R}^{d}$ then the $r$th order differential of $\bbf$ at $\bc$ with increment $\bu$ is given by
$\diff^r \bbf(\bc;\bu) = \{ \bI_p \otimes (\bu^\top)^{\otimes r} \} \D^{\otimes r} \bbf(\bc)=\{\vec_{d^r,p}^{-1} \D^{\otimes r}\bbf(\bc)\}^\top\bu^{\otimes r}$.
\item
If $\ba\in\mathbb R^{pd^r}$ satisfies $\diff^r \bbf(\bc;\bu)=(\vec_{d^r,p}^{-1}\ba)^\top\bu^{\otimes r}$ for all $\bu\in\mathbb R^d$, then $\mathsf D^{\otimes r} \bbf(\bc) = (\bI_p \otimes \S_{d,r}) \ba$. If $\bA \in \mathcal{M}_{d^r \times p}$ satisfies $\diff^r \bbf(\bc;\bu)=\bA^\top\bu^{\otimes r}$ for all $\bu\in\mathbb R^d$, then $\mathsf D^{\otimes r} \bbf(\bc) = (\bI_p \otimes \S_{d,r}) \vec \bA = \vec (\S_{d,r} \bA)$.
\end{enumerate}
\end{theorem}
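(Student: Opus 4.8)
The plan is to reduce the vector-valued statement to the scalar identification in Theorem~\ref{thm:ident} by arguing componentwise, and then to reassemble the $p$ scalar conclusions using the Kronecker-product and $\vec$ bookkeeping. Throughout, the key algebraic fact I would invoke is the standard identity $(\bI_p\otimes\S_{d,r})\vec\bA=\vec(\S_{d,r}\bA)$, which is the special case of $\vec(\bX\bY\bZ)=(\bZ^\top\otimes\bX)\vec\bY$ with $\bX=\S_{d,r}$, $\bY=\bA$ and $\bZ=\bI_p$.

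For part (i), I would start from the definition $\diff^r\bbf(\bc;\cdot)=(\diff^rf_1(\bc;\cdot),\dots,\diff^rf_p(\bc;\cdot))$ and apply Theorem~\ref{thm:ident}(i) to each scalar component $f_j$, obtaining $\diff^rf_j(\bc;\bu)=\D^{\otimes r}f_j(\bc)^\top\bu^{\otimes r}$. Stacking the $p$ scalar differentials and recognizing the resulting block structure gives the first expression $\{\bI_p\otimes(\bu^\top)^{\otimes r}\}\D^{\otimes r}\bbf(\bc)$; the second expression follows because the $j$th column of $\vec_{d^r,p}^{-1}\D^{\otimes r}\bbf(\bc)$ is exactly $\D^{\otimes r}f_j(\bc)$, using the explicit inverse-vec formula of Lemma~\ref{lem:invvec}. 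Both manipulations are already sketched in the discussion preceding the theorem, so part (i) is essentially established there.

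For part (ii), I would treat the matrix version first. Writing $\ba_1,\dots,\ba_p\in\mathbb R^{d^r}$ for the columns of $\bA\in\mathcal M_{d^r\times p}$, the hypothesis $\diff^r\bbf(\bc;\bu)=\bA^\top\bu^{\otimes r}$ reads componentwise as $\diff^rf_j(\bc;\bu)=\ba_j^\top\bu^{\otimes r}$ for every $\bu\in\mathbb R^d$ and every $j\in\{1,\dots,p\}$, since the $j$th row of $\bA^\top$ is $\ba_j^\top$. Theorem~\ref{thm:ident}(ii) then yields $\D^{\otimes r}f_j(\bc)=\S_{d,r}\ba_j$ for each $j$. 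Stacking these, the vector $\D^{\otimes r}\bbf(\bc)$ has the blocks $\S_{d,r}\ba_j$ arranged in succession, which is precisely $\vec(\S_{d,r}\bA)$; the identity above rewrites this as $(\bI_p\otimes\S_{d,r})\vec\bA$. The vector version is then immediate: setting $\bA=\vec_{d^r,p}^{-1}\ba$ so that $\ba=\vec\bA$, the hypothesis $\diff^r\bbf(\bc;\bu)=(\vec_{d^r,p}^{-1}\ba)^\top\bu^{\otimes r}$ coincides with the matrix hypothesis, and the conclusion becomes $\D^{\otimes r}\bbf(\bc)=(\bI_p\otimes\S_{d,r})\vec\bA=(\bI_p\otimes\S_{d,r})\ba$.

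The content of the argument is entirely carried by Theorem~\ref{thm:ident}(ii); the only step that genuinely requires care is verifying that the $j$th entry of $\bA^\top\bu^{\otimes r}$ (equivalently, of $(\vec_{d^r,p}^{-1}\ba)^\top\bu^{\otimes r}$) is exactly $\ba_j^\top\bu^{\otimes r}$, so that the scalar identification can be applied cleanly to each component. Once the columns of $\bA$ are correctly matched to the component functions $f_j$, the remainder is pure Kronecker-and-$\vec$ bookkeeping and presents no real obstacle.
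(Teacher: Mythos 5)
Your proposal is correct and follows essentially the same route as the paper: part (i) is taken from the discussion preceding the theorem, and part (ii) is proved by reading the hypothesis componentwise, applying Theorem~\ref{thm:ident}(ii) to each $f_j$ with the $j$th column $\ba_j$ of $\bA$, stacking the results into $\vec(\S_{d,r}\bA)=(\bI_p\otimes\S_{d,r})\vec\bA$, and then obtaining the vector form by setting $\bA=\vec_{d^r,p}^{-1}\ba$. The bookkeeping steps you flag (matching columns of $\bA$ to components $f_j$, and the identity $(\bI_p\otimes\S_{d,r})\vec\bA=\vec(\S_{d,r}\bA)$) are exactly the ones the paper relies on.
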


Theorem~\ref{thm:ident-vec}(i) is shown above. The proof of  Theorem~\ref{thm:ident-vec}(ii) is deferred to Appendix~\ref{app:ident-vec}.

Observe that $\bA = \vec_{d^r,p}^{-1}\ba=\ba$ for $p=1$, which ensures that, for the case of a scalar-valued function, Theorem~\ref{thm:ident-vec} reduces to Theorem~\ref{thm:ident}.
Furthermore, observe that $\S_{d,r}=\bI_d$ for $r=1$, which implies that the symmetrizer matrix in effect is not involved in the identification of the first derivative, since if $\diff \bbf(\bc;\bu)=\mat A^\top\bu$ for some $\bA\in\mathcal M_{d\times p}$ then $\mathsf D\bbf(\bc) = \ba=\vec\bA$.

Theorems~\ref{thm:ident} and \ref{thm:ident-vec} are useful to obtain the $r$th order derivative by iterating from the first differential, which may require considerable matrix algebra to isolate the $r$-fold Kronecker product of the infinitesimal $\bu^{\otimes r}$.
The following theorem provides an alternative with the identification of the $r$th derivative from the differential of the $(r-1)$th order derivative.

\begin{theorem}[Iterative identification] \label{thm:ident-iter}
Let $\bbf\colon\mathbb R^d\to\mathbb R^p$ be a function that is $r$-times differentiable at $\bc$, for some $r >1$. Further suppose that its $(r-1)$th derivative, $\D^{\otimes (r-1)} \bbf$, has been already obtained.
If $\mat B\in\mathcal{M}_{d \times pd^{r-1}}$ satisfies $\diff \{\D^{\otimes (r-1)} \bbf\}(\bc;\bu) =\mat B^\top \bu$ for all $\bu\in\mathbb R^d$, then $\mathsf D^{\otimes r} \bbf(\bc)= \vec \mat B$.
\end{theorem}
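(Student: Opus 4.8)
The plan is to recognize this statement as nothing more than the first-order vector-valued identification applied to the already-computed $(r-1)$th derivative, exploiting the internal consistency formula $\D^{\otimes r}\bbf=\D(\D^{\otimes(r-1)}\bbf)$ established in Section~\ref{sec:prelim}. First I would set $\bg:=\D^{\otimes(r-1)}\bbf$ and regard it as a single vector-valued function $\bg\colon\mathbb R^d\to\mathbb R^{pd^{r-1}}$, so that the role played by the output dimension $p$ in Theorem~\ref{thm:ident-vec} is now played by $q:=pd^{r-1}$. Since $\bbf$ is $r$-times differentiable at $\bc$, the recursive definition of $r$-fold differentiability guarantees that every partial derivative of order $r-1$ is differentiable at $\bc$; hence $\bg$ is (once) differentiable at $\bc$ and its first differential $\diff\bg(\bc;\cdot)$ is well defined, which is what licenses the application of Theorem~\ref{thm:ident-vec} to $\bg$ at order $1$.

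The hypothesis $\diff\{\D^{\otimes(r-1)}\bbf\}(\bc;\bu)=\mat B^\top\bu$ is then precisely the condition $\diff\bg(\bc;\bu)=\mat B^\top\bu$ with $\mat B\in\mathcal M_{d\times q}$, which is exactly the identifying relation of Theorem~\ref{thm:ident-vec}(ii) taken at $r=1$. The key simplification is that $\S_{d,1}=\bI_d$, so that the uniqueness-after-symmetrization of that theorem collapses to strict uniqueness, and the identification reads $\D\bg(\bc)=\vec(\S_{d,1}\mat B)=\vec\mat B$, with no symmetrizer surviving.

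Finally I would close the argument by invoking the iterative relation $\D^{\otimes r}\bbf=\D(\D^{\otimes(r-1)}\bbf)=\D\bg$, evaluated at $\bc$, to obtain $\D^{\otimes r}\bbf(\bc)=\D\bg(\bc)=\vec\mat B$, as claimed. The argument is short because the heavy lifting—existence of the differential, the uniqueness statement, and the $\vec$ bookkeeping—has already been carried out in Theorem~\ref{thm:ident-vec}; the only points that require care are the dimensional relabeling $p\mapsto pd^{r-1}$ and the verification that $\bg$ is genuinely differentiable at $\bc$ so that the order-$1$ identification applies legitimately. I do not expect a combinatorial obstacle here, in contrast to the direct route of isolating $\bu^{\otimes r}$ from the full $r$th differential—precisely the difficulty this iterative identification is designed to circumvent.
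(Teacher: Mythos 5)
Your proposal is correct and follows essentially the same route as the paper: both reduce the claim to the first-order case of Theorem~\ref{thm:ident-vec}(ii) (where $\S_{d,1}=\bI_d$ makes the identification strict) applied to $\bg=\D^{\otimes(r-1)}\bbf$, combined with the iterative relation $\D^{\otimes r}\bbf=\D(\D^{\otimes(r-1)}\bbf)$. The only cosmetic difference is that you apply the vector-valued identification in one shot with output dimension $q=pd^{r-1}$, whereas the paper first handles scalar $f$ and then stacks the $p$ blocks $\bB_1,\dots,\bB_p$ to reassemble $\vec\bB$.
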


The proof is in Appendix~\ref{app:ident-iter}.

If we have that $\diff^r \bbf(\bc;\bu)=\mat A^\top\bu^{\otimes r}$ and
$\diff \{\D^{\otimes (r-1)} \bbf\}(\bc;\bu)=\mat B^\top\bu$ for all $\bu\in\mathbb R^d$,
then Theorems~\ref{thm:ident-vec} and \ref{thm:ident-iter} imply that $\vec(\S_{d,r} \bA) = \D^{\otimes r} \bbf(\bc) = \vec \bB$, although $\bA \neq \bB$ in general since $\bA \in \mathcal{M}_{d^r\times p}$ and $\bB \in \mathcal{M}_{d \times pd^{r-1}}$.  Therefore, $\bA$ and $\bB$ must contain the same elements but in a different layout. It is the joint action of the vectorization and the symmetrizer matrix that facilitates their re-arrangement into a common form $\D^{\otimes r} \bbf(\bc)$.

From Theorem~\ref{thm:ident-iter}, to obtain the $r$th derivative we are only required to compute a first order differential as a product of matrix and a single $d$-vector infinitesimal $\bu$ at each iteration, which can be easier to compute than the $r$th order differential as a product of a matrix and a $r$-fold Kronecker product of the $d$-vector infinitesimal $\bu^{\otimes r}$ required in Theorem~\ref{thm:ident-vec}.

\subsection{Matrix-valued functions and functions of a matrix variable}

We end with a discussion on our proposition for the derivative of a matrix-valued function and a function of a matrix variable. Let $\mat F\colon\mathbb{R}^d \to \mathcal{M}_{p\times q}$ be a matrix-valued function of a vector variable. Following on from our treatment of vector-valued functions, it is straightforward to apply the identification in Theorem~\ref{thm:ident-vec} to $\vec\mat F\colon\mathbb{R}^d \to \mathbb{R}^{pq}$, since it is a vector-valued function of a vector variable.

Thus the outstanding question is the analysis of functions of a matrix variable $\bX$. Even if it appears initially to be most intuitive to define derivatives with respect to $\bX$, e.g. as exposited in \citet[Section~1.4]{KvR05} and \cite{Mag10}, these same authors in their respective papers subsequently argue that this is not desirable for many reasons. Instead, they propose to also vectorize the free variable, that is to analyze $\vec\mat F (\bX)$ with respect to $\vec \bX$. Whilst they restrict themselves to the first and second order derivatives, in our case, we can appeal to Theorems~\ref{thm:ident} and \ref{thm:ident-vec} for arbitrary order derivatives.

Although it is out of scope of this paper to settle definitively this difficult question of derivatives with respect to matrix variables, we highlight that our vectorizing approach offers systematic solutions to the key questions of: (i) how to define the dimensions of the derivatives, and (ii) how to identify higher order differentials with their derivatives. Let the $(i,j)$th component function of $\mat F$ be $f_{ij}$ for $i=1,\dots,p$, $j=1,\dots,q$, and $\bX\in\mathcal M_{c\times d}$. Then, the $r$th derivative of $\vec\mat F$ with respect to $\vec\bX$ is defined to be the vector
\begin{equation}\label{eq:DvecF}
\D^{\otimes r}\vec\mat F(\mat X)=\begin{bbmatrix} \mathsf D^{\otimes r}f_{11}(\mat X)\\ \hline
\vdots \\ \hline
\mathsf D^{\otimes r}f_{pq}(\mat X)\end{bbmatrix}\in\mathbb R^{pqc^rd^r},
\end{equation}
where $\mathsf D^{\otimes r}f_{ij}(\mat X)=\partial^rf_{ij}(\mat X)/(\partial\vec\bX)^{\otimes r}\in\mathbb R^{c^rd^r}$ for each $i,j$.

Thus by enumerating the possible combinations in Equation~\eqref{eq:DvecF}, our answers to the former two questions are summarized in Table~\ref{tab:ident}. This table contains the identifications for an arbitrary order $r$ for all the combinations a scalar $f \in \mathbb{R}$, vector $\bbf\in \mathbb{R}^p $ and matrix-valued $\mat F\in \mathcal{M}_{p\times q}$ function of a scalar $x \in \mathbb{R}$, vector $\bx \in \mathbb{R}^d$ and matrix $\bX \in \mathcal{M}_{c\times d}$ variable.  Following the notational convention of \citet{MN19}, we denote the infinitesimal as $\diff \bx$ etc. in Table~\ref{tab:ident} rather than $\bu$ as in the theorem statements.

\begin{table}[!ht]
\centering
\setlength{\tabcolsep}{2pt}
\begin{tabular}{
  c
  *{2}{>{$}r<{$}@{}>{${}={}$}c@{}>{$}l<{$}}
  c
}
Function & \multicolumn{3}{c}{Differential} & \multicolumn{3}{c}{Derivative} & \multicolumn{1}{c}{Dimension} \\ \hline \hline
  $f(x)$
& \diff^r f(x) && a (\diff x)^r
& \D^{\otimes r} f(x) && a
& $\mathbb{R}$ \\
  $\bbf(x)$
& \diff^r \bbf(x) && \ba (\diff x)^r
& \D^{\otimes r} \bbf(x) && \ba
& $\mathbb{R}^p$ \\
  $\mat F(x)$
& \diff^r \vec \mat F(x) && \ba (\diff x)^r
& \D^{\otimes r} \vec \mat F(x) && \ba
& $\mathbb{R}^{pq}$ \\ \hline
  $f(\bx)$
& \diff^r f(\bx) && \ba^\top (\diff\bx)^{\otimes r}
& \D^{\otimes r} f(\bx) && \S_{d,r} \ba
& $\mathbb{R}^{d^r}$ \\
  $\bbf(\bx)$
& \diff^r \bbf(\bx) && (\vec_{d^r,p}^{-1}\ba)^\top (\diff\bx)^{\otimes r}
& \D^{\otimes r} \bbf(\bx) && (\bI_p \otimes \S_{d,r}) \ba
& $\mathbb{R}^{pd^r}$ \\
  $\mat F(\bx)$
& \diff^r \vec\mat F(\bx) && (\vec_{d^r,pq}^{-1}\ba)^\top (\diff\bx)^{\otimes r}
& \D^{\otimes r} \vec \mat F(\bx) && (\bI_{pq} \otimes \S_{d,r}) \ba
& $\mathbb{R}^{pq d^r}$\\
\hline
  $f(\bX)$
& \diff^r f(\bX) && \ba^\top (\diff\vec\bX)^{\otimes r}
& \D^{\otimes r} f(\bX) && \S_{cd,r}\ba
& $\mathbb{R}^{c^rd^r}$ \\
  $\bbf(\bX)$
& \diff^r \bbf(\bX) && (\vec_{c^rd^r,p}^{-1}\ba)^\top (\diff\vec\bX)^{\otimes r}
& \D^{\otimes r} \bbf(\bX) && (\bI_p \otimes \S_{cd,r}) \ba
& $\mathbb{R}^{p c^rd^r}$ \\
  $\mat F(\bX)$
& \diff^r \vec \mat F(\bX) && (\vec_{c^rd^r,pq}^{-1}\ba)^\top (\diff\vec\bX)^{\otimes r}
& \D^{\otimes r} \vec \mat F(\bX) && (\bI_{pq} \otimes \S_{cd,r}) \ba
& $\mathbb{R}^{pq c^rd^r}$
\end{tabular}
\caption{Higher order identifications, for the functions $f \in \mathbb{R}, \bbf\in \mathbb{R}^p, \mat F\in \mathcal{M}_{p\times q}$, and the variables $x \in \mathbb{R}, \bx \in \mathbb{R}^d, \bX \in \mathcal{M}_{c\times d}$. The first column is the function, the second is the $r$th order differential, the third is the $r$th derivative and the fourth is the dimension of the vectorized derivative. }
\label{tab:ident}
\end{table}

The differentials and derivatives of the vector- and matrix-valued functions in the second and third columns in Table~\ref{tab:ident} are vectors.
In contrast, whilst \citet{KvR05} and \citet{Mag10} also define the differentials as vectors, they insist that matrix-valued derivatives be identified with these vector-valued differentials. For instance,
\citet[][p. 126]{KvR05} express their preference to define the (first) derivative of $\bbf=(f_1,\dots,f_p)$ as
$$
\dfrac{\partial \bbf^\top}{\partial \vec \bX}
= \bbf^\top \otimes \dfrac{\partial}{\partial \vec \bX}
= \left[ \begin{array}{@{}c|c|c@{}}
\dfrac{\partial f_1}{\partial \vec \bX}
& \dots
& \dfrac{\partial f_p}{\partial \vec \bX}
\end{array}\right]
\in \mathcal{M}_{cd \times p},
$$
whereas \citet{Mag10} employs the transpose of this arrangement
$$
\dfrac{\partial \bbf}{\partial \vec^\top \bX}
= \bbf\otimes\left(\frac{\partial}{\partial \vec \bX}\right)^\top
= \left(\dfrac{\partial \bbf^\top}{\partial \vec \bX}\right)^\top
\in \mathcal{M}_{p \times cd}.
$$
Our vectorized derivative in Equation~\eqref{eq:DvecF} is related to the \citeauthor{KvR05} arrangement because it is the vectorization of the former:
$$
\D \bbf = \dfrac{\partial \bbf}{\partial \vec \bX}
= \bbf \otimes \dfrac{\partial}{\partial \vec \bX} = \vec \left( \dfrac{\partial \bbf^\top}{\partial \vec \bX} \right) \in \mathbb{R}^{pcd},
$$
since we have $\ba \otimes \bb = \vec (\ba^\top \otimes \bb)$ for any vectors $\ba, \bb$. On the other hand, since $\D \bbf$ is the vectorization of the transpose of \citeauthor{Mag10} arrangement, it also retains the relationship with the vectorization of the transposed Jacobian for a function of a vector variable.

Hence, Table~\ref{tab:ident} with $r=1$ is essentially the same as Table 9.2 in \citet{MN19} for the first order identification. For the second order identification, since these matrix-valued derivative forms are composed of arrangements of blocks of matrices, these authors have only been able to define an identification with a second order derivative for a scalar-valued function $f$ as $\partial f/[(\partial \vec \bX) (\partial \vec^\top \bX)]$ or $\partial f/[(\partial \vec^\top \bX) (\partial \vec \bX)]$. Thus for the second order identification, Table 10.3 in \citet{MN19} contains only the rows of Table~\ref{tab:ident} with scalar $f$ for $r=2$.

Whilst these authors establish further important properties for an algebra for their first derivatives, e.g. rules for the product and the composition of two functions,  we demonstrate next that vectorized derivatives facilitate a systematic definition of these rules for arbitrary order derivatives.

\section{Product and chain rules for vectorized higher order derivatives} \label{sec:chain}

We examine the product and chain rules for higher order derivatives. The product rule for the multiplication by constants is the easiest to establish since the differential operator is a linear operator. The product rule for the product of two scalar-valued functions, also known as the general Leibniz rule, is well known, though the case for the product of vector-valued functions remains largely unexamined.
The chain rule for the composition of two functions is also known as the Fa\`{a} di Bruno's formula, see \citet[Chapter~4]{Ave97} and \citet{CS96} for their treatment of scalar-valued functions. Again, the case for vector-valued functions remains largely unknown. With our vectorized differential analysis framework we re-cast some of these existing results and develop other hitherto unestablished ones.

For brevity, we provide the results for scalar- and vector-valued functions of vector variables, since the results (i) for scalar variables can be immediately inferred from the results for vector variables, and (ii) for matrix-valued functions and matrix variables, if they are vectorized beforehand, can be immediately inferred from those for vector-valued functions and vector variables.

We begin with some rules for the derivative of a function multiplied by a constant coefficient (i.e. the latter does not involve the free variable). The proof of all the results in this section, which are given in Appendix~\ref{app:prod-chain}, make an extensive use of Theorems \ref{thm:ident} and \ref{thm:ident-vec} to identify the derivatives from the differentials but, again for brevity, we state these results only in terms of derivatives.

\begin{theorem}[Constant multiplication]\label{thm:deriv-const} \,
\begin{enumerate}[label=(\roman*)]
\item Let the function $f\colon\mathbb R^d\to\mathbb R$ be $r$-times differentiable at $\bc$. If $\ba \in \mathbb{R}^q$ is a constant vector, then the $r$th derivative of $\ba f$ at $\bc$ is
$ \D^{\otimes r}(\ba f)(\bc) = \ba \otimes \D^{\otimes r } f(\bc) \in \mathbb{R}^{qd^r}$.

\item Let the function $\bbf\colon\mathbb R^d\to\mathbb R^p$ be $r$-times differentiable at $\bc$. If $\ba \in \mathbb{R}^q$ is a constant vector, then the $r$th derivative of $\ba\otimes\bbf$ at $\bc$ is $\D^{\otimes r} (\ba \otimes \bbf)(\bc) = \ba \otimes \D^{\otimes r } \bbf(\bc) \in\mathbb{R}^{pqd^r}$.
If $\bA \in \mathcal{M}_{q\times p}$ is a constant matrix, then the $r$th derivative of $\bA\bbf$ at $\bc$ is $\D^{\otimes r} (\bA \bbf)(\bc) = (\bA \otimes \bI_{d^r})  \D^{\otimes r} \bbf(\bc)\in\mathbb{R}^{qd^r}$.
\end{enumerate}
\end{theorem}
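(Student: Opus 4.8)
The plan is to treat all three identities uniformly by the standard identification recipe of this framework: compute the $r$th differential of the transformed function, massage it into the canonical form appearing in Theorem~\ref{thm:ident-vec}, and then read off the derivative via the uniqueness statement in part~(ii) of that theorem. The common starting point is the linearity of the differential operator, which lets the constant factor pass outside the differential. Since $\diff^r$ acts component-wise and each component of the transformed function is a constant multiple (or constant linear combination) of the $f_i$, we obtain $\diff^r(\ba f)(\bc;\bu)=\ba\,\diff^r f(\bc;\bu)$, $\diff^r(\ba\otimes\bbf)(\bc;\bu)=\ba\otimes\diff^r\bbf(\bc;\bu)$ and $\diff^r(\bA\bbf)(\bc;\bu)=\bA\,\diff^r\bbf(\bc;\bu)$, valid for all $\bu\in\mathbb R^d$.

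The one structural fact I would isolate first, and which is the crux of the whole argument, is that the vectorized $r$th derivative of a scalar function is fixed by the symmetrizer, i.e. $\S_{d,r}\D^{\otimes r}f(\bc)=\D^{\otimes r}f(\bc)$. This follows at once by chaining the two parts of Theorem~\ref{thm:ident}: part~(i) gives $\diff^rf(\bc;\bu)=\D^{\otimes r}f(\bc)^\top\bu^{\otimes r}$, so the hypothesis of part~(ii) holds with $\ba=\D^{\otimes r}f(\bc)$, forcing $\D^{\otimes r}f(\bc)=\S_{d,r}\D^{\otimes r}f(\bc)$. Because the columns of $\bG:=\vec_{d^r,p}^{-1}\D^{\otimes r}\bbf(\bc)$ are exactly the scalar derivatives $\D^{\otimes r}f_j(\bc)$, the same invariance holds column-wise, so $\S_{d,r}\bG=\bG$. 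This is precisely what makes the symmetrizer emitted by Theorem~\ref{thm:ident-vec}(ii) act trivially and collapse to the clean formulas in the statement; identifying that the symmetrizer is a no-op here is the conceptual obstacle, everything after it being bookkeeping.

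With these two ingredients each identity reduces to routine Kronecker-and-vec algebra. For part~(i), I substitute the scalar identification and use the mixed-product property to rewrite $\ba\,\D^{\otimes r}f(\bc)^\top\bu^{\otimes r}=(\ba^\top\otimes\D^{\otimes r}f(\bc))^\top\bu^{\otimes r}$; applying Theorem~\ref{thm:ident-vec}(ii) with output dimension $q$ gives $\D^{\otimes r}(\ba f)(\bc)=\vec\{\S_{d,r}(\ba^\top\otimes\D^{\otimes r}f(\bc))\}$, and since $\ba^\top\otimes\D^{\otimes r}f(\bc)=\D^{\otimes r}f(\bc)\,\ba^\top$ the invariance drops $\S_{d,r}$, whereupon $\vec(\D^{\otimes r}f(\bc)\,\ba^\top)=\ba\otimes\D^{\otimes r}f(\bc)$ finishes it. For part~(ii), writing $\diff^r\bbf(\bc;\bu)=\bG^\top\bu^{\otimes r}$, the two sub-cases become $\diff^r(\ba\otimes\bbf)(\bc;\bu)=(\ba\otimes\bG^\top)\bu^{\otimes r}$ and $\diff^r(\bA\bbf)(\bc;\bu)=(\bA\bG^\top)\bu^{\otimes r}$; Theorem~\ref{thm:ident-vec}(ii) then returns $\vec\{\S_{d,r}(\ba^\top\otimes\bG)\}$ and $\vec\{\S_{d,r}(\bG\bA^\top)\}$ respectively, and after using $\S_{d,r}\bG=\bG$ the identities $\vec(\ba^\top\otimes\bG)=\ba\otimes\vec\bG$ and $\vec(\bG\bA^\top)=(\bA\otimes\bI_{d^r})\vec\bG$, together with $\vec\bG=\D^{\otimes r}\bbf(\bc)$, yield the two stated formulas.
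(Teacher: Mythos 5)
Your proposal is correct and follows essentially the same route as the paper: pull the constant out of the differential by linearity, apply the identification in Theorem~\ref{thm:ident-vec}(ii), and simplify with Kronecker/vec algebra. The only cosmetic difference is that you isolate the invariance $\S_{d,r}\D^{\otimes r}f(\bc)=\D^{\otimes r}f(\bc)$ (and its column-wise extension) as an explicit lemma, whereas the paper uses it implicitly in part~(i) and sidesteps it in part~(ii) by carrying $\vec(\S_{d,r}\bB)$ along as the expression for $\D^{\otimes r}\bbf(\bc)$.
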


Theorem~\ref{thm:deriv-const} verifies that $\D^{\otimes r}$ demonstrates an expected behavior under constant multiplication.
From this, we next move onto the derivative of the product of two functions. Whilst the product rule for higher order partial derivatives of the product of two scalar-valued functions is well established as the general Leibniz rule, we establish it here for the vectorized derivative of the Kronecker product of two vector-valued functions.

\begin{theorem}[General Leibniz rule]\label{thm:deriv-prod} \,
\begin{enumerate}[label=(\roman*)]
\item Let the functions $f,g\colon\mathbb R^d\to\mathbb R$ be $r$-times differentiable at $\bc$. Then the $r$th derivative of $f\cdot g$ at $\bc$ is
$$\D^{\otimes r}(f\cdot g)(\bc)=\S_{d,r}\sum_{j=0}^r\binom{r}{j}\D^{\otimes r-j}f(\bc)\otimes\D^{\otimes j}g(\bc) \in \mathbb{R}^{d^r}.$$

\item Let the functions $\bbf\colon\mathbb R^d\to\mathbb R^p, \bg\colon\mathbb R^d\to\mathbb R^q$ be $r$-times differentiable at $\bc$. Then the $r$th derivative of $\bbf\otimes\bg$ at $\bc$ is
$$
\D^{\otimes r}(\bbf \otimes \bg)(\bc) = (\bI_{pq} \otimes \S_{d,r}) \sum_{j=0}^r\binom{r}{j} \vec\big\{\vec^{-1}_{d^{r-j},p}\D^{\otimes r-j}\bbf(\bc)  \otimes \vec^{-1}_{d^{j},q}\D^{\otimes j}\bg(\bc)\big\}\in \mathbb{R}^{pqd^r}.
$$
\end{enumerate}
\end{theorem}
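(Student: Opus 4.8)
The plan is to derive both formulas by first computing the $r$th order differential of the product, rewriting it in the canonical form $\ba^\top\bu^{\otimes r}$ (respectively $\bA^\top\bu^{\otimes r}$), and then invoking the uniqueness clauses of the identification theorems to read off the derivative, which is exactly where the symmetrizer matrix enters. The crucial input is a Leibniz rule at the level of differentials,
\[
\diff^r(f\cdot g)(\bc;\bu)=\sum_{j=0}^r\binom{r}{j}\diff^{r-j}f(\bc;\bu)\,\diff^j g(\bc;\bu),
\]
together with its vector analogue in which the ordinary product is replaced by the Kronecker product. I would establish this either by induction from the first-order product rule for differentials, or, more cleanly, by reducing to one variable: setting $\phi(t)=f(\bc+t\bu)$ and $\chi(t)=g(\bc+t\bu)$, the $r$th differential coincides with the $r$th derivative at $t=0$ of $\phi\chi$, so the identity is just the classical one-dimensional Leibniz rule. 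The same reduction applies verbatim to $\bbf\otimes\bg$ because the Kronecker product is bilinear and the Leibniz rule holds for every bilinear operation.

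For part (i), I would substitute Theorem~\ref{thm:ident}(i) into each factor, writing $\diff^{r-j}f(\bc;\bu)=\D^{\otimes (r-j)}f(\bc)^\top\bu^{\otimes (r-j)}$ and similarly for $g$. Since these factors are scalars, their product equals their Kronecker product, and the mixed product property $(\mat A\mat B)\otimes(\mat C\mat D)=(\mat A\otimes\mat C)(\mat B\otimes\mat D)$, combined with $\bu^{\otimes (r-j)}\otimes\bu^{\otimes j}=\bu^{\otimes r}$ and $\mat A^\top\otimes\mat C^\top=(\mat A\otimes\mat C)^\top$, collapses the $j$th term to $\{\D^{\otimes (r-j)}f(\bc)\otimes\D^{\otimes j}g(\bc)\}^\top\bu^{\otimes r}$. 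Summing over $j$ then gives $\diff^r(f\cdot g)(\bc;\bu)=\ba^\top\bu^{\otimes r}$ with $\ba=\sum_{j=0}^r\binom{r}{j}\D^{\otimes (r-j)}f(\bc)\otimes\D^{\otimes j}g(\bc)$, and Theorem~\ref{thm:ident}(ii) yields $\D^{\otimes r}(f\cdot g)(\bc)=\S_{d,r}\ba$, which is the claimed expression.

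For part (ii), I would abbreviate $\mat F_{r-j}=\vec_{d^{r-j},p}^{-1}\D^{\otimes (r-j)}\bbf(\bc)$ and $\mat G_j=\vec_{d^{j},q}^{-1}\D^{\otimes j}\bg(\bc)$, so that Theorem~\ref{thm:ident-vec}(i) gives $\diff^{r-j}\bbf(\bc;\bu)=\mat F_{r-j}^\top\bu^{\otimes (r-j)}$ and $\diff^{j}\bg(\bc;\bu)=\mat G_j^\top\bu^{\otimes j}$. Inserting these into the Kronecker-valued Leibniz rule and applying the same mixed product identity gives $\diff^{r-j}\bbf(\bc;\bu)\otimes\diff^{j}\bg(\bc;\bu)=(\mat F_{r-j}\otimes\mat G_j)^\top\bu^{\otimes r}$, hence $\diff^r(\bbf\otimes\bg)(\bc;\bu)=\bA^\top\bu^{\otimes r}$ with $\bA=\sum_{j=0}^r\binom{r}{j}\mat F_{r-j}\otimes\mat G_j\in\mathcal M_{d^r\times pq}$. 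Applying Theorem~\ref{thm:ident-vec}(ii) with $pq$ in the role of $p$ then gives $\D^{\otimes r}(\bbf\otimes\bg)(\bc)=(\bI_{pq}\otimes\S_{d,r})\vec\bA$, and distributing $\vec$ over the sum and unabbreviating recovers the stated formula. I expect the main obstacle to be purely bookkeeping in part (ii): keeping the ordering and block layout consistent through the $\vec$/$\vec^{-1}$ conversions, in particular checking that the mixed product property is applied with the factorization that sends $\mat F_{r-j}^\top\otimes\mat G_j^\top=(\mat F_{r-j}\otimes\mat G_j)^\top$ into $\mathcal M_{pq\times d^r}$ and that the resulting dimensions match exactly those demanded by Theorem~\ref{thm:ident-vec}(ii). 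The conceptual content—reducing to the one-variable Leibniz rule and isolating $\bu^{\otimes r}$—is otherwise routine.
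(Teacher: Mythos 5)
Your proposal is correct and follows essentially the same route as the paper: establish the Leibniz rule at the level of differentials (the paper also does this by induction from the first-order Kronecker product rule $\diff(\bbf\otimes\bg)=\diff\bbf\otimes\bg+\bbf\otimes\diff\bg$), substitute the identification of each factor, use the mixed-product property to isolate $\bu^{\otimes r}$, and invoke the uniqueness clause of Theorems~\ref{thm:ident} and \ref{thm:ident-vec}. The only organizational difference is that the paper obtains part (i) as the $p=q=1$ special case of part (ii) rather than proving it separately.
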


The terms in the latter summation can be simplified by introducing commutation matrices to express the vectorized form of a Kronecker product of matrices in terms of the Kronecker product of the vectorized matrices \citep[][Theorem 3.10]{MN19}, leading to
$$\vec\big\{\vec^{-1}_{d^{r-j},p}\D^{\otimes r-j}\bbf(\bc)  \otimes \vec^{-1}_{d^{j},q}\D^{\otimes j}\bg(\bc)\big\}=(\bI_p \otimes \mat K_{q,d^{r-j}} \otimes \bI_{d^j}) \{\D^{\otimes r-j}\bbf(\bc)\otimes\D^{\otimes j}\bg(\bc)\}.$$
{Commutation matrices are widely utilized in matrix algebra and analysis: one of their most important properties is that an $(m,n)$ order commutation matrix $\K_{m,n} \in \mathcal{M}_{mn \times mn}$ satisfies $\K_{m,n} (\vec \bA) = \vec (\bA^\top)$ for an $m \times n$  matrix $\bA$, see \citet[Chapter~8.6]{Sch17} and \citet[Chapter~3.5]{MN19} for an overview of their properties.}

The statement of the general Leibniz rule for scalar functions of a vector variable is usually expressed for each partial derivative singly with a multi-index notation, e.g. \citet[Lemma~2.6]{CS96} or \citet{Har06}, whereas Theorem~\ref{thm:deriv-prod}(i) offers a concise, global expression containing all the $r$th order partial derivatives. Theorem~\ref{thm:deriv-prod}(ii) extends the Leibniz rule to the Kronecker product of two vector-valued functions.

The last situation that we consider in this section concerns a formula for the higher order derivatives of the composition of two functions, where the composition is defined by $(\bg\circ \bbf)(\bx)=\bg\{\bbf(\bx)\}$. Expressing the derivatives of $\bg\circ \bbf$ in terms of the derivatives of $\bg$ and $\bbf$ will prove to be very useful to obtain some complicated derivatives in a simple way, as we will see in Sections \ref{sec:example} and \ref{sec:connect} below.

First, from the usual chain rule for Jacobian matrices it immediately follows that, for a scalar-valued function $g\colon\mathbb R^p\to\mathbb R$, the first derivative of the composition with $\bbf\colon\mathbb R^d\to\mathbb R^p$ at $\bc$ is $\D(g\circ \bbf)(\bc)=\big[\D g\{\bbf(\bc)\}^\top\otimes\bI_d\big]\D\bbf(\bc)$, and for a vector-valued function $\bg\colon\mathbb R^p\to\mathbb R^q$ the former can be generalized to $\D(\bg\circ\bbf)(\bc)=\big([\vec_{p,q}^{-1}\D \bg\{\bbf(\bc)\}]^\top\otimes\bI_d\big)\D\bbf(\bc)$.


The goal is to derive a formula for the $r$th derivative of $\bg\circ\bbf$ for an arbitrary $r$. The computation of this derivative involves the set $\mathcal J_r=\{\bbm=(m_1,\dots,m_r)\in\mathbb N_0^r\colon\sum_{\ell=1}^r\ell m_\ell=r\}$ containing all the non-negative integer solutions of $1\cdot m_1+2\cdot m_2+\cdots+r\cdot m_r=r$, which can be also expressed as $\mathcal J_r=\bigcup_{k=1}^r\mathcal J_{k,r}$, with $\mathcal J_{k,r}=\{\bbm\in\mathcal J_r\colon|\bbm|=k\}$. Note that \citet[Equation~(11.18)]{VN97} supply a computationally efficient algorithm for enumerating all elements of $\mathcal J_r$. Furthermore, let us denote $\pi_{\bbm}=r!/\prod_{\ell=1}^r[m_\ell!(\ell!)^{m_\ell}]$ for any $\bbm\in\mathcal J_r$.

\begin{theorem}[Fa\`a di Bruno's formula]\label{thm:deriv-comp}
Let the function $\bbf\colon\mathbb R^d\to\mathbb R^p$ be $r$-times differentiable at $\bc$.
\begin{enumerate}[label=(\roman*)]
\item Let $g\colon\mathbb R^p\to\mathbb R$ be $r$-times differentiable at $\bbf(\bc)$. Then the $r$th derivative of $g\circ\bbf$ at $\bc$ is
$$\D^{\otimes r}(g\circ \bbf)(\bc) = \sum_{\bbm\in\mathcal J_{r}}\pi_{\bbm}\big[\D^{\otimes |\bbm|}g\{\bbf(\bc)\}^\top\otimes\S_{d,r}\big]\bigotimes_{\ell=1}^r\{\D^{\otimes \ell}\bbf(\bc)\}^{\otimes m_\ell}\in\mathbb R^{d^r},$$
where $\D^{\otimes |\bbm|} g\{\bbf(\bc)\} = \D^{\otimes |\bbm|}g(\bc') \lvert_{\bc'= \bbf(\bc)}$ denotes the $|\bbm|$th derivative of $g$ evaluated at $\bbf(\bc)$.

\item Let $\bg\colon\mathbb R^p\to\mathbb R^q$ be $r$-times differentiable at $\bbf(\bc)$. Then the $r$th derivative of $\bg\circ\bbf$ at $\bc$ is
\begin{align*}
\D^{\otimes r}(\bg \circ \bbf) (\bc)
&=\sum_{\bbm\in\mathcal J_{r}}\pi_{\bbm} \Big( \big[\vec_{p^{|\bbm|},q}^{-1} \D^{\otimes |\bbm|}\bg\{\bbf(\bc)\} \big]^\top \otimes\S_{d,r} \Big)  \bigotimes_{\ell=1}^r\{\D^{\otimes \ell}\bbf(\bc)\}^{\otimes m_\ell}\in \mathbb{R}^{qd^r}.
\end{align*}
\end{enumerate}
\end{theorem}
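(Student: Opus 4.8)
The plan is to reduce the statement to the identification Theorems~\ref{thm:ident} and~\ref{thm:ident-vec} by first establishing a differential-level Fa\`a di Bruno formula and then reading off the derivative. Write $\bA_\ell=\vec^{-1}_{d^\ell,p}\D^{\otimes\ell}\bbf(\bc)\in\mathcal M_{d^\ell\times p}$, so that $\D^{\otimes\ell}\bbf(\bc)=\vec\bA_\ell$ and, by Theorem~\ref{thm:ident-vec}(i), the $\ell$th differential is the genuine $p$-vector $\bw_\ell:=\diff^\ell\bbf(\bc;\bu)=\bA_\ell^\top\bu^{\otimes\ell}$. First I would prove the intermediate identity
\[
\diff^r(g\circ\bbf)(\bc;\bu)=\sum_{\bbm\in\mathcal J_r}\pi_{\bbm}\,\D^{\otimes|\bbm|}g\{\bbf(\bc)\}^\top\bigotimes_{\ell=1}^r\bw_\ell^{\otimes m_\ell},
\]
in which every contraction is unambiguous since each $\bw_\ell\in\mathbb R^p$. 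This comes from substituting the order-$r$ expansion $\bbf(\bc+\bu)-\bbf(\bc)=\sum_{\ell\ge1}\tfrac1{\ell!}\bw_\ell+o(\|\bu\|^r)$ into the expansion of $g$ about $\bbf(\bc)$, expanding $\{\bbf(\bc+\bu)-\bbf(\bc)\}^{\otimes k}$ by multilinearity, and collecting the part homogeneous of degree $r$ in $\bu$. The coefficient $\pi_{\bbm}$ then emerges from three factors: the $1/k!$ of the $k$th term of $g$, the $\prod_\ell(\ell!)^{-m_\ell}$ from the Taylor coefficients of $\bbf$, and the multinomial count $k!/\prod_\ell m_\ell!$ of the ordered compositions $(j_1,\dots,j_k)$ of $r$ realizing the pattern $\bbm$; the orderings may be merged because $\D^{\otimes k}g$ is fixed by $\S_{p,k}$ (commutativity of mixed partials). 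Uniqueness of asymptotic expansions identifies this degree-$r$ part with $\tfrac1{r!}\diff^r(g\circ\bbf)(\bc;\bu)$; the only delicate point is to check that substituting one $o(\cdot)$-expansion into another does not corrupt the degree-$r$ coefficient, which is the standard justification for composing Taylor expansions.

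Next I would extract the infinitesimal. Using $\bw_\ell^{\otimes m_\ell}=(\bA_\ell^\top)^{\otimes m_\ell}\bu^{\otimes\ell m_\ell}$ and $\sum_\ell\ell m_\ell=r$ on $\mathcal J_r$, the product $\bigotimes_\ell\bw_\ell^{\otimes m_\ell}$ collapses to $[\bigotimes_\ell\bA_\ell^{\otimes m_\ell}]^\top\bu^{\otimes r}$, so $\diff^r(g\circ\bbf)(\bc;\bu)=\ba^\top\bu^{\otimes r}$ with $\ba=\sum_{\bbm}\pi_{\bbm}[\bigotimes_\ell\bA_\ell^{\otimes m_\ell}]\D^{\otimes|\bbm|}g\{\bbf(\bc)\}$. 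Theorem~\ref{thm:ident}(ii) then gives $\D^{\otimes r}(g\circ\bbf)(\bc)=\S_{d,r}\ba$ for the scalar case~(i). It remains to show $\S_{d,r}\ba$ equals the stated closed form, i.e.\ that for each $\bbm$
\[
\S_{d,r}\Big[\bigotimes_\ell\bA_\ell^{\otimes m_\ell}\Big]\D^{\otimes|\bbm|}g=\big[\D^{\otimes|\bbm|}g^\top\otimes\S_{d,r}\big]\bigotimes_\ell(\vec\bA_\ell)^{\otimes m_\ell}.
\]
With $\mat M\bv=(\bv^\top\otimes\bI_{d^r})\vec\mat M$ and $\S_{d,r}(\bv^\top\otimes\bI_{d^r})=\bv^\top\otimes\S_{d,r}$, both sides reduce to $(\bv^\top\otimes\S_{d,r})$ applied to $\vec\mat M$ and to $\bigotimes_\ell(\vec\bA_\ell)^{\otimes m_\ell}$ respectively, where $\mat M=\bigotimes_\ell\bA_\ell^{\otimes m_\ell}$ and $\bv=\D^{\otimes|\bbm|}g$.

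The main obstacle is this last equality. The vectors $\vec\mat M$ and $\bigotimes_\ell(\vec\bA_\ell)^{\otimes m_\ell}$ are \emph{not} equal: they arrange the same entries with the $p$- and $d^\ell$-indices interleaved differently, and they are related by a product of commutation matrices obtained by iterating $\vec(\mat B\otimes\mat C)=(\bI\otimes\bK\otimes\bI)(\vec\mat B\otimes\vec\mat C)$ (the tool already used in Theorem~\ref{thm:deriv-prod}). The crux is to prove that, after premultiplication by $\bv^\top\otimes\S_{d,r}$, these commutation matrices act trivially. This rests on two symmetry facts: $\S_{d,r}$ is invariant under any permutation of the $r$ copies of $\mathbb R^d$ (its defining property), which absorbs the reordering of the $d$-slots; and $\bv=\D^{\otimes|\bbm|}g$ is fixed by $\S_{p,|\bbm|}$, which through the $\bv^\top$ factor absorbs the reordering of the $p$-slots. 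I expect isolating this ``commutation-absorption'' statement as a small standalone lemma to be the technically demanding step, since it is precisely where the interleaving of the two index types must be controlled.

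Finally, part~(ii) follows from part~(i) by stacking. Applying~(i) to each component $g_s\circ\bbf$, $s=1,\dots,q$, and stacking the results vertically, the row blocks $\D^{\otimes|\bbm|}g_s\{\bbf(\bc)\}^\top\otimes\S_{d,r}$ assemble into $\big(\big[\,\vec^{-1}_{p^{|\bbm|},q}\D^{\otimes|\bbm|}\bg\{\bbf(\bc)\}\,\big]^\top\otimes\S_{d,r}\big)$, because stacking the rows $\D^{\otimes|\bbm|}g_s^\top$ produces exactly $[\vec^{-1}_{p^{|\bbm|},q}\D^{\otimes|\bbm|}\bg]^\top$ and because stacking the row blocks $\mat R_s\otimes\mat S$ of a Kronecker product returns $\mat R\otimes\mat S$; this yields the stated formula. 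As an alternative to the whole approach, the theorem can instead be proved by induction on $r$, differentiating the order-$(r-1)$ formula with the Leibniz rule (Theorem~\ref{thm:deriv-prod}) and the chain rule, and applying the iterative identification (Theorem~\ref{thm:ident-iter}), which promotes $\S_{d,r-1}$ to $\S_{d,r}$ automatically and so avoids the commutation-absorption lemma, at the cost of verifying that $\pi_{\bbm}$ and $\mathcal J_r$ satisfy the required recursion.
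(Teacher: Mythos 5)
Your proposal is correct in outline and reaches the paper's formula by a route that is the same in spirit (expand the composition, collect the degree-$r$ part, identify via Theorems~\ref{thm:ident} and~\ref{thm:ident-vec}, then stack components for part~(ii)) but different in execution. The paper does not compose Taylor expansions with remainders: it first invokes a Spindler-type reduction, namely that $\D^{\otimes r}(g\circ\bbf)(\bc)$ depends only on $\D^{\otimes k}g\{\bbf(\bc)\}$ and $\D^{\otimes k}\bbf(\bc)$ for $k\le r$, and then replaces $g$ and $\bbf$ by explicit polynomial surrogates $\tilde g(\by)=\sum_k\bm w_k^\top\by^{\otimes k}$ and $\tilde\bbf(\bx)=\sum_\ell\{\bI_p\otimes(\bx^\top)^{\otimes\ell}\}\bm v_\ell$ whose derivatives are controlled by Holmquist's results. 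The composition is then computed \emph{exactly} via a multinomial expansion with a partial symmetrizer $\S_{p,\bbm}$, and the degree-$r$ coefficient $\bb_r$ is read off; this sidesteps both your ``does substituting one $o(\cdot)$-expansion into another corrupt the degree-$r$ coefficient'' worry and the bookkeeping of where $\pi_\bbm$ comes from (it falls out of $k!/(m_1!\cdots m_r!)$ times the normalizations $\bm w_k=\D^{\otimes k}g/k!$, $\bm v_\ell=\D^{\otimes\ell}\bbf/\ell!$). What your route buys is that the intermediate differential-level Fa\`a di Bruno identity is the classical multilinear-form statement, recognizable and reusable; what the paper's route buys is that all analysis is replaced by finite algebra on polynomials.

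The one place where your plan is genuinely incomplete is the ``commutation-absorption'' lemma, and you have correctly diagnosed it as the crux: the identity
\begin{equation*}
\S_{d,r}\Big[\bigotimes_{\ell}\bA_\ell^{\otimes m_\ell}\Big]\D^{\otimes|\bbm|}g
=\big[\D^{\otimes|\bbm|}g^\top\otimes\S_{d,r}\big]\bigotimes_{\ell}(\vec\bA_\ell)^{\otimes m_\ell}
\end{equation*}
is exactly the reconciliation of the interleaved $(p,d^{\ell})$ index ordering with the sorted $(p^{|\bbm|},d^r)$ ordering. The paper faces the identical issue inside its polynomial computation, at the step passing from $\bigotimes_\ell[\{\bI_p\otimes(\bx^\top)^{\otimes\ell}\}\bm v_\ell]^{\otimes m_\ell}$ to $\{\bI_{p^k}\otimes(\bx^\top)^{\otimes\sum_\ell \ell m_\ell}\}\bigotimes_\ell\bm v_\ell^{\otimes m_\ell}$, and it resolves it using precisely the two symmetry facts you name ($\S_{p,\bbm}\bm w_k=\bm w_k$ for the $p$-slots, and the final $\S_{d,r}$ for the $d$-slots). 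So your lemma is true and your proposed ingredients suffice, but as written it is asserted rather than proved; a complete write-up must either prove it directly (tracking the riffle permutation that relates $\vec\bigotimes_\ell\bA_\ell^{\otimes m_\ell}$ to $\bigotimes_\ell\vec\bA_\ell^{\otimes m_\ell}$ and showing it is absorbed) or adopt the paper's polynomial device, which packages the same work into Holmquist's multinomial expansion. Your alternative inductive route via Theorems~\ref{thm:deriv-prod} and~\ref{thm:ident-iter} is also viable and is not the one the paper takes.
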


{Lemma~\ref{lem:invvec} can be invoked to simplify the inverse vec in Theorem~\ref{thm:deriv-comp}(ii) if required.}
Theorem 2.1 in \citet{CS96} also provides a higher order chain formula, but only for individual partial derivatives using multi-indices, whereas Theorem~\ref{thm:deriv-comp}(i) offers a concise, global expression containing all the $r$th order partial derivatives. Moreover, \citeauthor{CS96} noted that it is highly difficult to obtain their results for $g\circ\bbf$, let alone $\bg \circ \bbf$, and so did not supply the latter, whereas Theorem~\ref{thm:deriv-comp}(ii) follows naturally from Theorem~\ref{thm:deriv-comp}(i).

\section{Some important vector and matrix functions}  \label{sec:example}

To illustrate the applicability of the identification theorems, we consider the derivatives of  some important vector and matrix functions.
Before we tackle the differential analysis, we establish an identity in Lemma~\ref{lem:vec-matrix-prod} which is a useful generalization of the well-known formula $\vec(\bA\bB\bC)=(\bC^\top\otimes\bA)\vec\bB$.

\begin{lemma}\label{lem:vec-matrix-prod}
Consider matrices $\bA \in \mathcal{M}_{m \times n}, \bB, \bC \in \mathcal{M}_{n \times n}, {\mat D} \in \mathcal{M}_{n \times p}$. Then, for $r\geq1$,
$$\vec \{\bA \bB (\bC \bB)^{r-1} {\mat D}\} = \vec \{\bA (\bB \bC)^{r-1} \bB {\mat D}\} = \{{\mat D}^\top \otimes (\vec^\top \bC)^{\otimes r-1} \otimes \bA\}  (\vec \bB)^{\otimes r}$$
where by convention $(\bC\bB)^0 = (\bB\bC)^0 = \bI_n$. For $\bA = \bC = {\mat D}$, the former identity simplifies to
$\vec \{(\bA \bB)^r \bA\} = \vec \{\bA (\bB \bA)^r\} = \{\bA^\top \otimes (\vec^\top \bA)^{\otimes r-1} \otimes  \bA\} (\vec \bB)^{\otimes r}$.
Furthermore, for $\bA = \bC = {\mat D} = \bI_n$, it yields $\vec (\bB^r) = \{\bI_n \otimes (\vec^\top \bI_n)^{\otimes r-1} \otimes \bI_n\} (\vec \bB)^{\otimes r}$.
\end{lemma}

The proof is detailed in Appendix~\ref{app:matrix}. This result is crucial for computing the differentials of the matrix functions that we consider in the sequel, since when it is applied with $\bB = \diff\bX$, then Lemma~\ref{lem:vec-matrix-prod} isolates $(\diff \vec \bX)^{\otimes r}$ which is required to identify an $r$th order derivative.

One of the fundamental rules of real analysis is the derivative of a monomial given by $(\diff^r/\diff x^r) (x^k) = 
\{k!/(k-r)!\} x^{k-r}$ for $r\leq k$. For a vector variable $\bx$ and a matrix variable $\bX$, the equivalent results are given in Theorem~\ref{thm:deriv-monomial}.

\begin{theorem}[Monomial]\label{thm:deriv-monomial}
Consider $1 \leq r \leq k$.
\begin{enumerate}[label=(\roman*)]
\item Let the function $\bbf\colon\mathcal M_{c\times d}\to\mathbb R^{b^k}$ be the $k$-fold Kronecker product $\bbf (\bX)=(\vec\bX)^{\otimes k}$, where $b=cd$. Then the $r$th derivative of $(\vec\bX)^{\otimes k}$ is
$$\D^{\otimes r}\{(\vec\bX)^{\otimes k}\}=(\mat\Gamma_{b,k,r}\otimes\S_{b,r})\big\{(\vec\bX)^{\otimes k-r}\otimes\vec\bI_{b^{r}}\big\}\in\mathbb R^{b^{k+r}},$$
where $\mat\Gamma_{b,k,r}=\prod_{j=0}^{r-1}(\mat\Lambda_{b,k-j}\otimes\bI_{b^{j}})=\mat\Lambda_{b,k}(\mat\Lambda_{b,k-1}\otimes \bI_{b})\cdots(\mat\Lambda_{b,k-r+1}\otimes \bI_{b^{r-1}})\in\mathcal M_{b^{k}\times b^{k}}$, with $\mat\Lambda_{b,k}=\sum_{j=1}^k\mat K_{b^{j},b^{k-j}}\in\mathcal M_{b^{k}\times b^{k}}$ being the sum of commutation matrices.

\item Let the function $\bbf\colon\mathbb R^d\to\mathbb R^{d^k}$ be the $k$-fold Kronecker product
$\bbf (\bx) = \bx^{\otimes k}$. Then the $r$th derivative of $\bx^{\otimes k}$ is
$$\D^{\otimes r} (\bx^{\otimes k}) =(\mat\Gamma_{d,k,r}\otimes\S_{d,r})\big(\bx^{\otimes k-r}\otimes\vec\bI_{d^{r}}\big)\in\mathbb R^{d^{k+r}}.$$

\item Let the function $\mat F \colon\mathcal{M}_{d\times d}\to\mathcal{M}_{d\times d}$ be the $k$th matrix power
$\mat F(\bX) = \bX^k$. Then the $r$th derivative of $\bX^k$ is
\begin{align*}
\D^{\otimes r} \vec (\bX^k)= (\mat\Upsilon_{d,k,r}\otimes\S_{d^2,r})\big\{(\vec\bX)^{\otimes k-r}\otimes\vec\bI_{d^{2r}}\big\} \in \mathbb{R}^{d^{2r+2}},
\end{align*}
where $\mat\Upsilon_{d,k,r}=\{\bI_d \otimes (\vec^\top \bI_d)^{\otimes k-1} \otimes \bI_d\} \mat\Gamma_{d^2,k,r}\in\mathcal M_{d^2\times d^{2k}}$.

\item Let function $f\colon\mathcal{M}_{d\times d}\to\mathbb R$ be the trace of the $k$th matrix power $f(\bX) = \tr \bX^k$. Then the $r$th derivative of $\tr \bX^k$ is
\begin{align*}
\D^{\otimes r} \tr (\bX^k)=
\big[\{(\vec^\top\bI_d) \mat\Upsilon_{d,k,r}\} \otimes\S_{d^2,r} \big] \big\{(\vec\bX)^{\otimes k-r}\otimes\vec\bI_{d^{2r}}\big\}
\in \mathbb{R}^{d^{2r}}.
\end{align*}
\end{enumerate}
\end{theorem}

The proof is given in Appendix~\ref{app:matrix} with the assistance of the identification Theorem~\ref{thm:ident-vec}, and Theorem~\ref{thm:deriv-const} and Lemma~\ref{lem:vec-matrix-prod} to handle the matrix powers in terms of the Kronecker powers.

For a scalar variable we have $\mat\Lambda_{1,k}=k$ and $\mat\Gamma_{1,k,r}=k\cdot(k-1)\cdots(k-r+1)=k!/(k-r)!$, so in that case all the previous derivatives coincide with the usual derivative of a monomial. Furthermore, \citet[p.~186]{MN19} established that $\diff \tr (\bX^k) = k \tr (\bX^{k-1} \diff\bX) = k \vec^\top \{(\bX^\top)^{k-1}\} \diff \vec \bX$, so that $\D \tr (\bX^k) = k \vec \{(\bX^\top)^{k-1}\}$. Whilst this first derivative is considerably simpler than our computation, it encounters intractable difficulties for higher order derivatives. In contrast, our more general formula in Theorem~\ref{thm:deriv-monomial} is well defined for all derivatives of $\tr (\bX^k)$, and for the first derivative we {verify that it is equivalent to the \citeauthor{MN19} formula in the last part of the proof of Theorem~\ref{thm:deriv-monomial}.}


In addition to the matrix monomial, other widely utilized matrix functions are the matrix inverse and the matrix determinant. The first order derivatives, and some higher order differentials (but not the corresponding derivatives), are established by \citet{MN19}: we extend them to a complete description of the $r$th order derivatives.

\begin{theorem}[Matrix inverse] \label{thm:matrix-inv}
Let the function $\mat F \colon \mathcal{M}_{d\times d}  \to \mathcal{M}_{d\times d} $ be the matrix inverse function $\mat F(\bX) = \bX^{-1} $ for all non-singular $\bX$. Then the $r$th derivative of $\bX^{-1}$ is
\begin{align*}
\D^{\otimes r} (\vec \bX^{-1})
&= (-1)^r r! (\bI_{d^2} \otimes \S_{d^2,r}) (\bI_d \otimes \K_{d,d^{2r}}) (\vec \bX^{-1})^{\otimes r+1}
 \in \mathbb{R}^{d^{2r+2}}.
\end{align*}
\end{theorem}

The proof of Theorem~\ref{thm:matrix-inv} is in Appendix \ref{app:matrix}.

For $r=1$, the proof of Theorem \ref{thm:matrix-inv} shows that $\D (\vec \bX^{-1}) = - \vec \{\bX^{-1} \otimes (\bX^{-1})^\top\}$, which agrees with \citet[Theorem~8.2, p.~168]{MN19}. For $r>1$, whilst \citeauthor{MN19} are able to assert that the $r$th order matrix differential as $\diff^r (\bX^{-1}) =  (-1)^r r! (\bX^{-1} \diff \bX)^r \bX^{-1}$, they have no identification result to transform this into a derivative. Theorem~\ref{thm:matrix-inv} establishes this $r$th order derivative via the application of Lemma~\ref{lem:vec-matrix-prod} to compute $\diff^r (\vec \bX^{-1})$ and subsequently the vector-valued identification in Theorem~\ref{thm:ident-vec}.


\begin{theorem}[Matrix determinant] \label{thm:matrix-det}
Consider $r\geq 1$.
\begin{enumerate}[label=(\roman*)]

\item Let the function $f \colon \mathcal{M}_{d\times d} \to \mathbb{R}$ be the log matrix determinant $f(\bX) = \log |\bX|$ for all non-singular $\bX$. Then the $r$th derivative of $\log |\bX|$ is
\begin{align*}
	\D^{\otimes r} \log |\bX|
	&= (-1)^{r-1}(r-1)!\S_{d^2,r}\mat K_{d,d^{2r-1}}(\vec\bX^{-1})^{\otimes r} \in \mathbb{R}^{d^{2r}}.
\end{align*}

\item Let the function $f \colon \mathcal{M}_{d\times d}  \to \mathbb{R}$ be the matrix determinant $f(\bX) = |\bX|$ for all non-singular $\bX$. Then the $r$th derivative of $|\bX|$  is
\begin{align*}
\D^{\otimes r} |\bX|
= |\bX|\S_{d^2,r}\mat\Xi_{d,r} (\vec \bX^{-1})^{\otimes r} \in \mathbb{R}^{d^{2r}}
\end{align*}
where
$$\mat\Xi_{d,r} = \sum_{\bbm\in\mathcal J_{r}}(-1)^{r-|\bbm|}\frac{r!}{\prod_{\ell=1}^r(m_\ell!\ell^{m_\ell})}\bigotimes_{\ell=1}^r\mat K_{d,d^{2\ell-1}}^{\otimes m_\ell} \in \mathcal{M}_{d^{2r}\times d^{2r}}.$$
\end{enumerate}
\end{theorem}


Note that \citet[p.~168]{MN19} found the $r$th order differential of the log matrix determinant as $\diff^r\log|\bX|=(-1)^{r-1}(r-1)!\tr\{(\bX^{-1}\diff\bX)^r\}$. However, they did not give the $r$th order derivative since they did not have an identification theorem as our Theorem \ref{thm:ident}. Besides, in Theorem 8.1 in that same reference, the first order differential of the matrix determinant is given as $\diff |\bX| = |\bX| \tr(\bX^{-1} \diff \bX)=|\bX|\vec^\top\{(\bX^{-1})^\top\}\diff\vec\bX$, so that $\D|\bX|=|\bX|\vec\{(\bX^{-1})^\top\}$. This agrees with Theorem \ref{thm:matrix-det}(ii) since $\mat\Xi_{d,1}=\mat K_{d,d}$ and $\mat K_{d,d}\vec(\bX^{-1})=\vec\{(\bX^{-1})^\top\}$. But \citeauthor{MN19} provided no higher order differentials nor derivatives of $|\bX|$, which are found for an arbitrary order in our Theorem \ref{thm:matrix-det}.

\section{Connections to existing results} \label{sec:connect}

In Sections~\ref{sec:ident} and \ref{sec:chain}, we laid the foundations for a rigorous framework for a differential calculus for vector-valued functions of vector variables. In this section, we continue to elaborate it by contextualizing existing results within this framework.

\subsection{Taylor's theorem with vectorized derivatives} \label{sec:Taylor}

We return to our motivating example of Taylor approximations in the Introduction.
Whilst Taylor polynomials are well known for scalar functions of a vector variable, via their characterization with multi-indices in Equation~\eqref{eq:difr}, we observe that Theorem~\ref{thm:ident-vec} allows for their characterization with vectorized derivatives.
If $f\colon\mathbb R^d\to\mathbb R$ is a function such that every element in
$\D^{\otimes j}f(\bc), 0\leq j \leq r$ is piecewise continuous, then Theorem~3.11.10 from \citet{BL86} states that its $r$th order Taylor polynomial approximation is given by
$$f(\bc + \bu) = \sum_{j=0}^r \frac{1}{j!} (\bu^\top \D)^j f (\bc) + \re_{\bc}(\bm u),$$
where $\bu \in \mathbb{R}^d$ and $\re_{\bc}(\bm u)/\|\bm u\|^r\to0$ as $\bm u\to{\bm 0}$.
This form is not amenable for our purposes since it combines the infinitesimal $\bu$ with
the action of the differential operator $\D$.
Using the identity
$(\boldsymbol{a}^\top \boldsymbol{b})^j = (\boldsymbol{a}^\top \boldsymbol{b})^{\otimes j} = (\boldsymbol{a}^\top)^{\otimes j} \boldsymbol{b}^{\otimes j}$
for vectors $\boldsymbol{a}, \boldsymbol{b}$ of
the same length, we can extricate the role of $\D$ from $\bu$ to obtain an alternative expansion
\begin{equation}
\label{eq:Taylor}f(\bc + \bu) = \sum_{j=0}^r \frac{1}{j!} (\bu^\top)^{\otimes j}
\D^{\otimes j}f(\bc) + \re_{\bc}(\bm u).
\end{equation}
Equation~\eqref{eq:Taylor} is a stepping stone to the development of Taylor polynomials for a vector-valued function.

\begin{theorem}[Vector-valued Taylor approximation]
\label{thm:Taylor}
Let $\bc$ and $\bc + \bu$ be distinct points in an open subset $\Omega \subseteq \mathbb{R}^d$
such that the straight line segment joining $\bc$ and $\bc + \bu$ lies in $\Omega$.
Let $\bbf\colon\mathbb{R}^d \rightarrow \mathbb{R}^p$ be a vector-valued function
that is $r$ times continuously differentiable on $\Omega$.
The $r$th order Taylor polynomial approximation of $\bbf$ is given by
$$\bbf(\bc+\bu) = \sum_{j=0}^r\frac{1}{j!}\big\{ \bI_p \otimes (\bu^\top)^{\otimes j}\big\} \D^{\otimes j} \bbf(\bc) + \re_{\bc}(\bm u)$$
where $\re_{\bc}(\bm u)$ is such that $\re_{\bc}(\bm u)/\|\bm u\|^r\to0$ as $\bm u\to{\bm 0}$.
\end{theorem}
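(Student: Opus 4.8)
The plan is to reduce the vector-valued statement to the already-established scalar expansion in Equation~\eqref{eq:Taylor} by working component-wise, and then reassembling the pieces with the Kronecker block identity.

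First I would decompose $\bbf=(f_1,\dots,f_p)$ into its scalar components. Since $\bbf$ is $r$ times continuously differentiable on $\Omega$, each $f_i\colon\mathbb R^d\to\mathbb R$ is likewise $r$ times continuously differentiable, so in particular every element of $\D^{\otimes j}f_i$, for $0\le j\le r$, is continuous and hence piecewise continuous along the segment joining $\bx$ and $\bx+\bu$. Thus the hypotheses needed for the scalar Taylor expansion are satisfied for each $f_i$, and applying Equation~\eqref{eq:Taylor} gives
$$f_i(\bx+\bu)=\sum_{j=0}^r\frac1{j!}(\bu^\top)^{\otimes j}\D^{\otimes j}f_i(\bx)+\re_{\bc,i}(\bu),$$
where $\re_{\bc,i}(\bu)/\|\bu\|^r\to0$ as $\bu\to\bm 0$.

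Next I would stack these $p$ identities into a single vector equation. By the definition of the vectorized derivative, $\D^{\otimes j}\bbf(\bx)$ is the column vector obtained by stacking $\D^{\otimes j}f_1(\bx),\dots,\D^{\otimes j}f_p(\bx)$, and the same block computation already carried out for the first part of Theorem~\ref{thm:ident-vec} shows that
$$\begin{bbmatrix}(\bu^\top)^{\otimes j}\D^{\otimes j}f_1(\bx)\\ \hline \vdots \\ \hline (\bu^\top)^{\otimes j}\D^{\otimes j}f_p(\bx)\end{bbmatrix}=\{\bI_p\otimes(\bu^\top)^{\otimes j}\}\D^{\otimes j}\bbf(\bx).$$
Consequently, summing over $j$ and stacking the component expansions reproduces exactly the asserted sum, with the vector remainder $\re_{\bc}(\bu)=(\re_{\bc,1}(\bu),\dots,\re_{\bc,p}(\bu))$.

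Finally I would verify that the assembled remainder satisfies the required order condition. Each component obeys $\re_{\bc,i}(\bu)/\|\bu\|^r\to0$, and since $\mathbb R^p$ is finite-dimensional all norms are equivalent; hence $\|\re_{\bc}(\bu)\|/\|\bu\|^r$ is dominated by the finitely many scalar ratios and therefore also tends to $0$. The only genuinely analytic step is this remainder bookkeeping, which is the main (though modest) obstacle: one must confirm that combining $p$ scalar remainders, each of order $o(\|\bu\|^r)$, yields a vector remainder of the same order. Everything else is algebraic assembly via the Kronecker block identity, so no new differential-calculus input beyond Equation~\eqref{eq:Taylor} and the component stacking is required.
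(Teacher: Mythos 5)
Your proposal is correct. Note that the paper itself supplies no internal proof of Theorem~\ref{thm:Taylor}; it simply defers to Theorem~5.8 of \citet{CD18}. Your argument --- reducing to the scalar expansion in Equation~\eqref{eq:Taylor} component by component, reassembling the sum via the block identity $\{\bI_p\otimes(\bu^\top)^{\otimes j}\}\D^{\otimes j}\bbf(\bx)$ already used in the derivation preceding Theorem~\ref{thm:ident-vec}, and checking that stacking $p$ remainders each of order $o(\|\bu\|^r)$ yields a vector remainder of the same order --- is exactly the standard route one would expect that reference to take, and every step is sound.
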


The proof of Theorem~\ref{thm:Taylor} was demonstrated in \citet[Theorem~5.8]{CD18}.

For the special case where the vector-valued function $\bbf$ can be expressed as the $s$th derivative of a scalar-valued function $g$, i.e. $g\colon \mathbb{R}^d \to \mathbb{R}$ and $\bbf\colon \mathbb{R}^d \to \mathbb{R}^{d^s}$ with $\bbf = \D^{\otimes s} g$, then we have
\begin{align*}
\bbf(\bc +\bu)
&= \sum_{j=0}^r \frac{1}{j!} [ \bI_{d^s} \otimes (\bu^\top)^{\otimes j}] \D^{\otimes s+j} g(\bc) + \re_{\bc}(\bm u) \\
&= \sum_{j=0}^r \frac{1}{j!} [ \bI_d^{\otimes s} \otimes (\bu^\top)^{\otimes j}] \S_{d,s+j} \D^{\otimes s+j} g(\bc) + \re_{\bc}(\bm u).
\end{align*}
The action of the symmetrizer matrix in the matrix product in the summand can therefore be also interpreted as permuting the order of each of the $s$ identity matrices $\bI_d$ and the $j$ infinitesimals $\bu$, rather than on $\D^{\otimes s+j} g(\bc)$. The latter are usually less easily expressed as an $(s+j)$-fold Kronecker product.

\subsection{General approximation of the identity with vectorized derivatives}\label{sec:mollifiers}

We analyze further the role of Taylor expansions in the approximation of the identity.  Let a kernel $K\colon\mathbb R^d\to\mathbb R$ be an integrable scalar-valued function with unit integral, and $f\colon\mathbb R^d\to\mathbb R$ be another integrable function. The convolution of $K$ and $f$ is defined by $K*f(\bx)=\int_{\mathbb R^d}K(\bx-\by)f(\by) \, \diff\by$, and it inherits the differentiability properties of $K$ \citep[][Theorem 9.3]{WZ77}. Therefore $K*f$ can be interpreted as a kernel-smoothed version of $f$. Moreover, if a rescaled version $K_h(\bx)=K(\bx/h)/h^d$ is considered, with a smoothing parameter $h>0$, then $K_h*f(\bx)\to f(\bx)$ in various senses as $h\to0$ \citep[][Section~9.2]{WZ77}.  When this convergence holds, then the family of functions $\{K_h\}_{h>0}$ is known as an approximation of the identity, or a mollifier. These convergence properties of $K_h*f$ rely on Taylor expansions where $h$ is an infinitesimal element.

The rescaling $K_h$ is commonly referred as a spherical rescaling, since it applies the same scaling factor for all coordinate directions $x_1, \dots, x_d$. An elliptical rescaling, $K_{\bm h}(\bx)=K(x_1/h_1,\dots,x_d/h_d)/(h_1\cdots h_d)$, where $\bm h=(h_1,\dots,h_d)$ is a vector of possibly different positive scaling factors, allows for a different rescaling for each coordinate direction. But the most general rescaling is obtained using $K_\bH(\bx)=|\bH|^{-1/2}K(\bH^{-1/2}\bx)$, where $\bH$ is a symmetric positive-definite matrix (i.e., $\bH>0$) and $\bH^{-1/2}$ is such that $\bH^{-1/2}\bH^{-1/2}=\bH^{-1}$. This general form, which subsumes the spherical and elliptical rescalings as special cases, additionally allows an arbitrary rotation before the elliptical rescaling, so the rescaling is no longer restricted to follow the coordinate directions. In the context where $f$ is a multivariate probability density function, \cite{WJ93} showed that approximations using this unconstrained scaling can lead to substantial gains in accuracy in statistical estimation.

The proof of the convergence of the spherical mollifiers $K_h*f$ carries over with minor adjustments to the general, unconstrained case of $K_\bH*f$, so that $K_\bH*f(\bx)\to f(\bx)$ as $\vec\bH\to0$ in the same senses as previously. However quantifying the rate of convergence of $K_\bH*f(\bx)- f(\bx)$ to zero requires a more involved analysis of the general approximation of the identity $\{K_\bH\}_{\bH>0}$.
We begin with
\begin{align*}
K_\bH*f(\bx)- f(\bx)&=\int_{\mathbb R^d}\{f(\bx-\by)-f(\bx)\}K_\bH(\by) \, \diff\by\\
&=\int_{\mathbb R^d}\big\{f(\bx-\bH^{1/2}\bz)-f(\bx)\big\}K(\bz) \, \diff\bz.
\end{align*}
If $f$ is $k$ times differentiable at $\bx$, then we can use a Taylor expansion with vectorized derivatives to approximate $f(\bx-\bH^{1/2}\bz)-f(\bx)\simeq\sum_{j=1}^{k} (-1)^j\frac1{j!}\D^{\otimes j}f(\bx)^\top(\bH^{1/2})^{\otimes j}\bz^{\otimes j}$. Furthermore, if $K$ is a kernel of order $k$, i.e. meaning that $\bmu_j(K)=\int_{\mathbb R^d}\bz^{\otimes j}K(\bz)\diff\bz=\mathbf 0$ for $j=1,\dots,k-1$ and $\bmu_{k}(K)=\int_{\mathbb R^d}\bz^{\otimes k}K(\bz)\,\diff\bz\neq\mathbf0$, then it follows that
\begin{equation}\label{eq:KHf}
K_\bH*f(\bx)- f(\bx)\simeq\frac1{k!}\D^{\otimes k}f(\bx)^\top(\bH^{1/2})^{\otimes k}\bmu_{k}(K).
\end{equation}
Equation (\ref{eq:KHf}) shows that vectorized derivatives in the Taylor polynomial allow for the separation of a matrix-valued infinitesimal $\bH^{1/2}$ and the free variable $\bx$, so that $K_\bH*f(\bx)-f(\bx)$ can be separated into a vectorized derivative of $f$, an infinitesimal element, and a vectorized moment of $K$.
So it is straightforward to assert, for a general approximation of the identity $\{K_\bH\}_{\bH>0}$ with a $k$th-order kernel, that $K_\bH*f(\bx)- f(\bx)$ converges to zero at the same rate as $(\bH^{1/2})^{\otimes k}$.

\subsection{Vector Hermite polynomials} \label{sec:hermite-faa}

Higher order differential analysis is of intense interest for the special case of the infinitely differentiable Gaussian density functions, due to its numerous statistical applications \citep[see][]{CD15}.
Let $\phi_\bSigma$ be the Gaussian density with mean $\bm 0$ and variance $\bSigma$, i.e. $\phi_\bSigma(\bx) = (2\pi)^{-1/2} |\bSigma|^{-1/2} \exp(-\bx^\top \bSigma^{-1} \bx/2)$. Then, Equation~(2.1) in the pioneering paper of \citet{Hol96a} states that
\begin{equation*}
	\D^{\otimes r} \phi_\bSigma(\bx) = (-1)^r (\bSigma^{-1})^{\otimes r} \boldsymbol{\mathcal{H}}_r (\bx; \bSigma) \phi_\bSigma(\bx),
\end{equation*}
where $\boldsymbol{\mathcal{H}}_r$ is the $r$th order vector Hermite polynomial, defined
in Equation~(3.3) in the same paper, as
\begin{equation}\label{eq:Hermite}
\boldsymbol{\mathcal{H}}_r (\bx; \bSigma) = r! \sum_{j=0}^{\lfloor r/2\rfloor} \frac{(-1)^j}{j! (r-2j)!2^j } \S_{d,r} \{\bx^{\otimes r-2j} \otimes
(\vec \bSigma)^{\otimes j}\}.
\end{equation}

Here we note that it is possible to derive a simple proof for the $r$th derivative of the Gaussian density from our Fa\`a di Bruno's formula.  Let $g(y)=(2\pi)^{-d/2} |\bSigma|^{-1/2}\exp(y)$ and $f(\bx)=-\bx^\top\bSigma^{-1}\bx/2$, so that
$\phi_\bSigma(\bx)=(g\circ f) (\bx)$. Then, $\D^{\otimes r}g(y)=g(y)$ for all $r$, and therefore $\D^{\otimes r}g\{f(\bx)\}=\phi_\bSigma(\bx)$. On the other hand, the differentials and derivatives of $f$ are $\diff f(\bx) = -\frac12(\diff \bx^\top \bSigma^{-1} \diff\bx + \bx^\top \bSigma^{-1} \diff\bx) = - \bx^\top \bSigma^{-1} \diff\bx$ so that $\D f(\bx) = -\bSigma^{-1} \bx$, and $\diff^2 f(\bx)=-(\diff \bx^\top)\bSigma^{-1}\diff\bx=-(\vec\bSigma^{-1})^{\top}(\diff\bx)^{\otimes2}$ so that $\D^{\otimes 2}f(\bx)=-\S_{d,2}\vec\bSigma^{-1}=-\vec\bSigma^{-1}$ from Theorem \ref{thm:ident} (because $\bSigma^{-1}$ is symmetric), and $\D^{\otimes r}f(\bx)=\bm 0$ for all $r\geq3$.

Then Theorem~\ref{thm:deriv-comp} asserts that
\begin{equation}\label{eq:DrphiSigma}
\D^{\otimes r} \phi_\bSigma(\bx)= \sum_{\bbm\in\mathcal J_{r}}\pi_{\bbm} \D^{\otimes |\bbm|}g\{f(\bx)\} \S_{d,r} \bigotimes_{\ell=1}^r\{\D^{\otimes \ell}f(\bx)\}^{\otimes m_\ell}
\end{equation}
and recall that $\bbm = (m_1, \dots, m_r)\in\mathcal J_r$ are the non-negative solutions to the linear Diophantine equation $1 \cdot m_1 +  2 \cdot m_2 + \dots  + r \cdot m_r = r$. Since $\D^{\otimes r}f(\bx) = \bm 0$ for all $r\geq 3$, then the terms in Equation~\eqref{eq:DrphiSigma} will be identically zero whenever $m_\ell>0$ for some $\ell\geq 3$. So it suffices to consider $\bbm\in\mathcal J_r$ with  $m_3 = \dots = m_r =0$, which simplifies the Diophantine equation to $m_1 + 2 m_2 = r$. Since $m_1\geq0$ and $m_2\in\mathbb N_0$, the former equation implies that $m_2\leq\lfloor r/2\rfloor$, so all its solutions are given by $m_1=r-2j$, $m_2=j$ for $j=0, 1, \dots, \lfloor r/2\rfloor$.
Then the coefficient $\pi_{\bbm}=r!/\prod_{\ell=1}^r\{m_\ell!(\ell!)^{m_\ell}\}$ has the simpler form $\pi_\bbm = r!/\{(r-2j)! j! 2^{j}\}.$
Combining these with
$\D^{\otimes r}g\{f(\bx)\}=\phi_\bSigma(\bx)$ for all $r$, $\D f(\bx) = -\bSigma^{-1} \bx$, and $\D^{\otimes 2}f(\bx)=-\vec\bSigma^{-1}$, we have
\begin{align*}
\D^{\otimes r} \phi_\bSigma(\bx)
&= \phi_\bSigma(\bx) \sum_{j=0}^{\lfloor r/2 \rfloor} \frac{r!}{(r-2j)! j! 2^{j}} \S_{d,r} \big[ \{\D f(\bx)\}^{\otimes r-2j} \otimes \{\D^{\otimes 2}f(\bx)\}^{\otimes j} \big] \\
&= \phi_\bSigma(\bx) \sum_{j=0}^{\lfloor r/2 \rfloor} \frac{r!}{(r-2j)! j! 2^{j}} \S_{d,r} \big\{ (-\bSigma^{-1}\bx)^{\otimes r-2j} \otimes (-\vec \bSigma^{-1})^{\otimes j} \big\} \\
&= (-1)^r (\bSigma^{-1})^{\otimes r} \phi_\bSigma(\bx)  \sum_{j=0}^{\lfloor r/2 \rfloor} \frac{ (-1)^j r!}{(r-2j)! j! 2^{j}} \S_{d,r} \big\{ \bx^{\otimes r-2j} \otimes (\vec \bSigma)^{\otimes j} \big\}.
\end{align*}
The summation is identical to the vector Hermite polynomial $\bm{\mathcal{H}}_r(\bx;\bSigma)$ introduced previously in (\ref{eq:Hermite}). This derivation is an alternative to the one based on a formal Taylor series expansion provided by \citet{Hol96a}.

\subsection{Individual partial derivatives within vectorized derivatives} \label{sec:local}

Our proposed derivative consists of a systematic ordering of all the possible higher order partial derivatives as a single vectorized derivative. This is a basic property in building our proposed algebra of differentials.
Nonetheless, there are situations where explicit knowledge of the location of certain mixed partial derivatives $\D^r_{i_1\cdots i_r}f$ is important, e.g., (i) to diagonalize the derivative which involves the extraction of elements on the main diagonal $\partial^r/\partial x_i^r, i=1, \dots, d$; (ii) to obtain the Laplacian $\triangle=\sum_{i=1}^d\partial^2/\partial x_i^2$; (iii) to express a multivariate density function $f$ in terms of its distribution function $F$ as $f=\partial^d F/(\partial x_1\cdots\partial x_d)$. Whilst this is trivial for multi-index or matrix or tensor representations of higher order derivatives, for vectorized representations it requires a separate procedure.

Given the indices $i_1,\dots,i_r\in\{1,\dots,d\}$, the problem is to locate the position $p=p(i_1,\dots,i_r)\in\{1,\dots,d^r\}$ where the partial derivative $\D^r_{i_1\cdots i_r}$ lies within the vector $\D^{\otimes r}$, i.e., such that $\D^r_{i_1\cdots i_r}=(\D^{\otimes r})_p$. But starting from $\be_i^\top\D=\partial/\partial x_i$ it is clear that $\D^r_{i_1\dots i_r}=(\be_{i_1}\otimes\cdots\otimes\be_{i_r})^\top\D^{\otimes r}$, so the problem reduces to locating the only nonzero element of $\bigotimes_{\ell=1}^r\be_{i_\ell}$ and, reasoning as in Lemma~1.3.1 in \citet{KvR05}, it follows that such an element is at position $p=p(i_1,\dots,i_r)=1+\sum_{j=1}^r(i_j-1)d^{r-j}$. Hence, locating an individual partial derivative within the derivative vector is trivial.

Moreover, since our regularity conditions ensure that $\D^r_{i_1\cdots i_r}=\D^r_{i_{\sigma(1)}\cdots i_{\sigma(r)}}$ for any permutation $\sigma\in\mathcal P_r$, then in fact any map $p_\sigma(i_1,\dots,i_r)=p(i_{\sigma(1)},\dots,i_{\sigma(r)})=1+\sum_{j=1}^r(i_{\sigma(j)}-1)d^{r-j}$ is also valid to locate $\D^r_{i_1\cdots i_r}$ within $\D^{\otimes r}$. For instance, \cite{CD15} used $p_\tau$ for the permutation $\tau$ such that $\tau^{-1}(j)=r-j+1$.

The map $p\colon\{1,\dots,d\}^r\to\{1,\dots, d^r\}$, defined between these two sets of the same cardinality, can be shown to be a bijection. Hence, its inverse function $p^{-1}$ is also useful to find out the multi-index form of a partial derivative located at a given coordinate of $\D^{\otimes r}$, as detailed in Appendix 2 in \cite{CD15}. Intuitively, $p_\tau^{-1}$ is in effect a change of base of an integer in $\{1,\dots,d^r\}$ from  base-10 to base-$d$, though using the numerals drawn from $\{1,\dots,d\}$ instead of the usual $\{0,\dots,d-1\}$.

\subsection{Vectorized higher order moments and cumulants} \label{sec:char}

Let $\bX$ be a $d$-variate random vector. For a multi-index $i_1,\dots,i_r\in\{1,\dots,d\}$, it is common to refer to the expected value $\mathbb E(X_{i_1}\cdots X_{i_r})$ as a mixed moment of order $r$. There are many of these individual real-valued mixed moments, and they are all contained in the $r$th order vectorized moment $\bmu_r=\mathbb E(\bX^{\otimes r})\in\mathbb R^{d^r}$. As is the case for the individual mixed partial derivatives, even if particular mixed moments can be of interest in some situations, the whole vectorized moment is needed for the expansion of the characteristic function or the moment-generating function of $\bX$.

Indeed, the moments of a random variable are closely related to the employed notion of derivative since the former can be obtained via the derivative of the moment-generating function at zero. Hence, the arrangement of the $r$th order moment is immediately inherited from the corresponding layout of the $r$th order derivative. In our case, if we compute the expectations and derivatives from first principles, then we demonstrate in Lemma \ref{lem:mom} in Section \ref{app:moments} that
\begin{equation} \label{eq:char}
\bmu_r=\E (\bX^{\otimes r}) = \D^{\otimes r} M_\bX (\bt) |_{\bt=\bm 0}
\end{equation}
where $M_\bX (\bt) = \E \{ \exp(\bt^\top \bX)\}$ denotes the moment-generating function of $\bX$.
This approach via vectorized derivatives was the only successful tool to find the moments of arbitrary order of a multivariate normal vector \citep{Hol88}, after several authors previously focused only on finding moments of certain particular orders.

\citet[p.~173]{KvR05} entertained the possibility of defining the $r$th moment as $\mathbb E(\bX^{\otimes r})$, though they eventually argued against this configuration due to that ``it is complicated to show where the moments of interest are situated in the vector''. With the map $p$ introduced in Section \ref{sec:local}, to carry out this localization no longer involves any difficult procedures. Moreover, these authors define a derivative that leads to the $r$th moment of $\bX$ as $\mathbb E\{\bX(\bX^\top)^{\otimes r-1}\}\in\mathcal M_{d\times d^{r-1}}$ \citep[Theorem 2.1.1]{KvR05}. This indeed exhibits the nice feature that the covariance is a $d\times d$ matrix (analogously to the Hessian matrix), but it does not represent a conceptual advantage over $\bmu_r=\mathbb E(\bX^{\otimes r}) = \vec (\E\{\bX(\bX^\top)^{\otimes r-1}\})$ in terms of higher order moments or the location of individual mixed moments, so we maintain our preference for the purely vectorized form $\bmu_r$.

The cumulants of the random vector $\bX$ provide an alternative to moments, which is particularly useful for Edgeworth and related expansions of distributions \citep{JTT21b}. The cumulant-generating function of $\bX$ is given by $C_\bX(\bt)=\log M_\bX(\bt)$ for $\bt\in\mathbb R^d$ and, by analogy with the moments, the $r$th order vectorized cumulant is defined as ${\bm \kappa}_r=\D^{\otimes r} C_\bX(\bt)|_{\bt=\bm 0}$. Recently, \cite{JTT21} explored in detail the special cases of $\bm\kappa_3$ and $\bm\kappa_4$ to define multivariate analogues of the skewness and kurtosis of multivariate distributions. Cumulants and moments are closely connected and it is useful to express the former in terms of the latter and vice versa \citep[see][]{Hol85b}. These relationships are usually difficult to describe, however, they follow easily from the higher order chain rule in Theorem \ref{thm:deriv-comp}. By expressing $M_\bX(\bt)=(g\circ C_\bX)(\bt)$ with $g(y)=\exp(y)$, it readily follows from Theorem \ref{thm:deriv-comp} that
$$\bmu_r = \sum_{\bbm\in\mathcal J_{r}}\pi_{\bbm}\S_{d,r}\bigotimes_{\ell=1}^r{\bm\kappa}_\ell^{\otimes m_\ell},$$
which corresponds to Theorem 4.1(i) in \cite{Hol85b}. Reciprocally, by writing $C_\bX(\bt)=(g\circ M_\bX)(\bt)$ with $g(y)=\log(y)$, Theorem \ref{thm:deriv-comp} immediately gives
$${\bm\kappa}_r = \sum_{\bbm\in\mathcal J_{r}}\pi_{\bbm}(-1)^{|\bbm|-1}(|\bbm|-1)!\S_{d,r}\bigotimes_{\ell=1}^r\bmu_\ell^{\otimes m_\ell},$$
which agrees with Theorem 4.1(iii) in \cite{Hol85b}.

\subsection{Unique vectorized moments and partial derivatives}

If $f\colon\mathbb R^d\to\mathbb R$ is $r$ times differentiable then $\D^{\otimes r}f$ is a vector of length $d^r$, but not all of its entries are distinct. The same occurs for the vectorized moment $\bmu_r=\mathbb E(\bX^{\otimes r})$, which includes many redundant mixed moments. This is not a problem for theoretical developments; on the contrary, having a neat configuration has proved to be beneficial (and even essential) for building an algebra of differentials and unveil general results that are valid for any arbitrary order $r$.

However, for computational purposes, it can be more efficient to first calculate only the distinct elements and, if required, then redistribute them to form the full derivative vector of length $d^r$. The number of distinct partial derivatives is the same as the number of distinct mixed moments, and they equal the number of monomials of degree $r$ in $d$ variables, i.e. $\binom{d+r-1}{r}$; see \citet[][Section II.5]{F68}. As $d$ increases, the proportion of distinct elements in either $\D^{\otimes r}f$ or $\bmu_r$ approaches $1/r!$, so the time savings from computing only the distinct elements of these vectors can be considerable, even for moderate values of $r$.

The vector containing all the distinct $r$th order mixed moments is called the $r$th order minimal moment in \citet[][Section 2.1.6]{KvR05}, and Theorem~2.1.10 in this reference shows how to obtain $\bmu_r$ from this minimal representation. The distinct partial derivatives of a function are called the unique partial derivatives in \citet[Section~5]{CD15}, where an efficient recursive algorithm to compute these unique partial derivatives (and the subsequent entire derivative vector) of the multivariate Gaussian density is exhibited.

\section{Conclusion} \label{sec:conc}

We have introduced a rigorous, comprehensive framework for the differential analysis for vector-valued functions of vector variables. The foundations of this analytic framework are the existence and uniqueness of the identifications between the differentials and the derivatives of any order. These existence and uniqueness properties have hitherto resisted a sufficiently complete characterization. The latter in turn facilitates the construction of an algebra of differentials/derivatives that is an intuitive generalization of that which exists for scalar-valued functions of scalar/vector variables.

We established two fundamental rules of this algebra in order to  compute higher order derivatives: (i)~a  Leibniz rule for the product of two functions  and (ii) Fa\`a di Bruno's rule for the composition of two functions. In addition to these foundational results, we {established explicit derivatives for the vector/matrix monomial, matrix trace, matrix inverse and matrix determinant;} and that well-known analytic results (such as Taylor's theorem, Hermite polynomials, and the relationship between moments and cumulants) can be re-cast within this framework, often with a considerable simplification of their development, and crucially with the ability to be generalized in an intuitive manner to any dimension and to any derivative order.

\paragraph{Acknowledgements.} The authors are grateful to Professor Bj\"{o}rn Holmquist for kindly sharing a copy of his unpublished research report. J. E. Chac\'on has been partially supported by the Spanish Ministerio de Ciencia e Innovaci\'on grant PID2019-109387GB-I00 and the Junta de Extremadura grant GR18016.

\appendix

\section{Appendix: Supporting lemmas and proofs}

\subsection{Scalar-valued identification} \label{app:ident}
\begin{lemma} \label{lem:ident-zero}
Let $\ba\in\mathbb R^d$. Then, $\ba^\top\bx^{\otimes r}=0$ for all $\bx\in\mathbb R^d$ if and only if $\S_{d,r}\ba=0$.
\end{lemma}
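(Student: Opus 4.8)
\section*{Proof plan}

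The plan is to prove the two implications separately, leaning on two elementary properties of the symmetrizer matrix: its symmetry, $\S_{d,r}^\top=\S_{d,r}$, and the fact that it fixes pure Kronecker powers, $\S_{d,r}\bx^{\otimes r}=\bx^{\otimes r}$. The latter is immediate from the defining averaging property $r!\,\S_{d,r}(\bv_1\otimes\cdots\otimes\bv_r)=\sum_{\sigma\in\mathcal P_r}(\bv_{\sigma(1)}\otimes\cdots\otimes\bv_{\sigma(r)})$ upon setting $\bv_1=\cdots=\bv_r=\bx$, whereupon every summand equals $\bx^{\otimes r}$ and the factor $r!$ cancels; the symmetry follows from the explicit form of $\S_{d,r}$ by transposing each Kronecker factor $\be_{i_\ell}\be_{i_{\sigma(\ell)}}^\top$ and relabelling the summation index tuple together with $\sigma\mapsto\sigma^{-1}$ (this is also recorded in the references cited for the explicit form). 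Here $\ba\in\mathbb R^{d^r}$, so that the pairing $\ba^\top\bx^{\otimes r}$ is well defined.

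First I would dispatch the easy direction. If $\S_{d,r}\ba=0$, then for every $\bx\in\mathbb R^d$,
$$\ba^\top\bx^{\otimes r}=\ba^\top\S_{d,r}\bx^{\otimes r}=(\S_{d,r}\ba)^\top\bx^{\otimes r}=0,$$
using $\S_{d,r}\bx^{\otimes r}=\bx^{\otimes r}$ and then $\S_{d,r}^\top=\S_{d,r}$.

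For the converse, suppose $\ba^\top\bx^{\otimes r}=0$ for all $\bx$. The key is to recognise that $P(\bx):=\ba^\top\bx^{\otimes r}$ is a homogeneous polynomial of degree $r$ whose associated symmetric $r$-linear form is exactly $\Phi(\bv_1,\dots,\bv_r)=\ba^\top\S_{d,r}(\bv_1\otimes\cdots\otimes\bv_r)$. Indeed, this map is multilinear, symmetric (by the symmetry property of $\S_{d,r}$), and restricts on the diagonal $\bv_1=\cdots=\bv_r=\bx$ to $\ba^\top\S_{d,r}\bx^{\otimes r}=\ba^\top\bx^{\otimes r}=P(\bx)$. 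Since a symmetric multilinear form is uniquely recovered from its diagonal restriction, $P\equiv0$ forces the entire form to vanish, giving $(\S_{d,r}\ba)^\top(\bv_1\otimes\cdots\otimes\bv_r)=0$ for all $\bv_1,\dots,\bv_r\in\mathbb R^d$. As the decomposable tensors $\bv_1\otimes\cdots\otimes\bv_r$ span $\mathbb R^{d^r}$, I conclude $\S_{d,r}\ba=0$.

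I expect the converse to be the main obstacle, specifically the passage from ``the homogeneous polynomial $P$ vanishes identically'' to ``the symmetrized coefficient vector $\S_{d,r}\ba$ vanishes''. The cleanest device is the polarization identity
$$\Phi(\bv_1,\dots,\bv_r)=\frac1{r!}\sum_{S\subseteq\{1,\dots,r\}}(-1)^{r-|S|}P\Big(\sum_{i\in S}\bv_i\Big),$$
which exhibits the multilinear form purely through diagonal evaluations of $P$ and hence vanishes as soon as $P$ does. An alternative, more hands-on route would be to expand $P(\bx)=\sum_{i_1,\dots,i_r}a_{i_1\cdots i_r}x_{i_1}\cdots x_{i_r}$, equate the coefficient of each monomial to zero, and verify that these coefficient sums coincide with the entries of $\S_{d,r}\ba$; this avoids invoking polarization as a black box but requires the bookkeeping of grouping index tuples by their permutation class, which the symmetrizer is precisely built to encode.
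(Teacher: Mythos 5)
Your proof is correct, and the converse direction takes a genuinely different route from the paper's. The easy implication (from $\S_{d,r}\ba=0$ to the vanishing of $\ba^\top\bx^{\otimes r}$) is identical in both arguments, resting on $\S_{d,r}^\top=\S_{d,r}$ and $\S_{d,r}\bx^{\otimes r}=\bx^{\otimes r}$. For the converse, the paper works directly in the basis $\{\be_{i_1}\otimes\cdots\otimes\be_{i_r}\}$: it tests the hypothesis against vectors $\bx=\be_{i_1}+\cdots+\be_{i_k}$, stratifies index tuples by their number of distinct coordinates, runs an induction on $k$, and finally relates the permutation sums appearing in $\S_{d,r}$ to these strata via the multinomial coefficient $r!/(r_1!\cdots r_k!)$. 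You instead observe that $\Phi(\bv_1,\dots,\bv_r)=\ba^\top\S_{d,r}(\bv_1\otimes\cdots\otimes\bv_r)$ is a symmetric $r$-linear form whose diagonal is $P(\bx)=\ba^\top\bx^{\otimes r}$, invoke polarization to conclude $\Phi\equiv0$ from $P\equiv0$, and finish because decomposable tensors span $\mathbb R^{d^r}$. Every step of this checks out (multilinearity, symmetry via the permutation-invariance of $\S_{d,r}$, and the diagonal restriction are all immediate from the properties the paper records), and over $\mathbb R$ the polarization identity is unproblematic. What your route buys is brevity and conceptual transparency: the combinatorial bookkeeping is delegated to a standard identity. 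What the paper's route buys is self-containedness: its induction and multinomial count are essentially an in-place derivation of the inclusion--exclusion that underlies polarization, specialized to the standard basis --- indeed the ``hands-on alternative'' you sketch at the end is close in spirit to what the paper actually does. You are also right that the lemma's hypothesis should read $\ba\in\mathbb R^{d^r}$ rather than $\ba\in\mathbb R^{d}$; that is a typographical slip in the statement.
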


\begin{proof}
If $\S_{d,r}\ba=0$, using the properties of the symmetrizer matrix \citep{Sch03}, then
$$\ba^\top\bx^{\otimes r}=\ba^\top\S_{d,r}\bx^{\otimes r}=\ba^\top\S_{d,r}^\top\bx^{\otimes r}=(\S_{d,r}\ba)^\top\bx^{\otimes r}=0.$$
To show the reverse implication assume that $\ba^\top\bx^{\otimes r}=0$ for all $\bx\in\mathbb R^d$ and note, from the explicit representation of $\S_{d,r}$, that it suffices to show that for any choice of $i_1,\dots,i_r\in\{1,\dots,d\}$, that we have
\begin{equation}\label{aPr}
\ba^\top\sum_{\sigma\in\mathcal P_r}\big(\be_{i_{\sigma(1)}}\otimes\cdots\otimes\be_{i_{\sigma(r)}}\big)=0.
\end{equation}
In order to prove Equation~\eqref{aPr}, we introduce some notation. For a vector $\bj=(j_1,\dots,j_k)$ of indices, let $|\{\bj\}|$ denote the number of its distinct coordinates; that is, the cardinality of the set $\{j_1,\dots,j_k\}$. Given $k$ different indices $i_1,\dots,i_k\in\{1,\dots,d\}$ with $k\leq r$, let $\mathcal I_{r,p}=\mathcal I_{r,p}(i_1,\dots,i_k)$ be the set of $r$-dimensional vectors of indices in $\{i_1,\dots,i_k\}$ having exactly $p$ different coordinates ($p\leq k$); that is,
$$\mathcal I_{r,p}=\mathcal I_{r,p}(i_1,\dots,i_k)=\big\{\bj=(j_1,\dots,j_r)\in\{i_1,\dots,i_k\}^r\colon |\{\bj\}|=p\big\}.$$

First we claim that, from the fact that $\ba^\top\bx=0$ for all $\bx\in\mathbb R^d$, it follows that
\begin{equation}\label{aIrk}
\ba^\top\sum_{\bj\in\mathcal I_{r,k}}(\be_{j_1}\otimes\cdots\otimes\be_{j_r})=0.
\end{equation}
To assert Equation~\eqref{aIrk}, we proceed by induction on $k$. For $k=1$, this statement affirms that for any $i\in\{1,\dots,d\}$ we have $\ba^\top\be_i^{\otimes r}=0$, which is trivially true by taking $\bx=\be_i$ in the hypothesis. So assume by induction that the result is true for any set of $k-1$ different indices and we will demonstrate that Equation~\eqref{aIrk} holds for any $k\leq r$ different indices $i_1,\dots,i_k\in\{1,\dots,d\}$. Notice that
$\{i_1,\dots,i_k\}^r=\bigcup_{p=1}^k\mathcal I_{r,p}$ with $\mathcal I_{r,p}\cap\mathcal I_{r,q}=\varnothing$ for $p\neq q$, so that taking $\bx=\be_{i_1}+\cdots+\be_{i_k}$ it follows that
$$0=\ba^\top(\be_{i_1}+\cdots+\be_{i_k})^{\otimes r}=\ba^\top\sum_{\bj\in\{i_1,\dots,i_k\}^r}(\be_{j_1}\otimes\cdots\otimes\be_{j_r})=\ba^\top\sum_{p=1}^k\sum_{\bj\in\mathcal I_{r,p}}(\be_{j_1}\otimes\cdots\otimes\be_{j_r}).
$$
By the induction hypothesis the right-hand-side reduces to $\ba^\top\sum_{\bj\in\mathcal I_{r,k}}(\be_{j_1}\otimes\cdots\otimes\be_{j_r})$, so this yields Equation~\eqref{aIrk}.

Finally, we use Equation~\eqref{aIrk} to assert Equation~\eqref{aPr}. Consider any indices $i_1,\dots,i_r\in\{1,\dots,d\}$ and denote $|\{i_1,\dots,i_r\}|=k$, with $k\leq r$. If all the indices are different, then $k=r$ and, moreover,
$$\{(i_{\sigma(1)},\dots,i_{\sigma(r)})\colon \sigma\in\mathcal P_r\}=\mathcal I_{r,r}(i_1,\dots,i_r),$$
so in this case Equation~\eqref{aPr} follows directly from Equation~\eqref{aIrk}. When $k<r$, it is sufficient that to demonstrate that the sum on the left hand side of Equation~\eqref{aPr} is proportional to the sum on the left hand side of Equation~\eqref{aIrk}. More precisely,  denote $\{i_1,\dots,i_r\}=\{\imath_1,\dots,\imath_k\}$, with $\imath_1,\dots,\imath_k\in\{1,\dots,d\}$, to represent the distinct coordinates of $(i_1,\dots,i_r)$. For any $\ell\in\{1,\dots,k\}$, write $r_\ell$ for the number of times that $\imath_\ell$ appears in $(i_1,\dots,i_r)$, so that $1\leq r_\ell\leq r$ for all $\ell=1,\dots,k$ and $r_1+\dots+r_k=r$. Then,
$$\sum_{\sigma\in\mathcal P_r}(\be_{i_{\sigma(1)}}\otimes\dots\otimes \be_{i_{\sigma(r)}})=r_1!\cdots r_k!\sum_{\bj\in\mathcal I_{r,k}(\imath_1,\dots,\imath_k)}(\be_{j_1}\otimes\cdots\otimes\be_{j_r}).$$
The former equation can be explicitly shown in the same way as in the combinatorial proof that $r!/(r_1!\cdots r_k!)$ is the number of permutations with repetition of the elements of $\{\imath_1,\dots,\imath_k\}$ with the element $\imath_\ell$ repeated $r_\ell$ times, $\ell=1,\dots,k$ \citep[see][Theorem 2.4.2]{Bru10}. The set of all permutations of $(i_1,\dots,i_r)$ is, in fact, a multi-set (there are repeated elements) of cardinality $r!$, whose elements are all the aforementioned permutations with repetitions, so that each of these permutations with repetitions appears exactly $r_1!\cdots r_k!$ times in the set of all permutations of $(i_1,\dots,i_r)$. This establishes the proportionality of the summations on the left hand sides of Equations~\eqref{aPr} and \eqref{aIrk}.
\end{proof}

\subsection{Vector-valued identification} \label{app:ident-vec}

\begin{proof}[Proof of Lemma~\ref{lem:invvec}]
This result elaborates on the entry `What is the inverse of the $\vec$ operator?' of the webpage {\tt math.stackexchange.com} which contains a slightly incomplete (and different) proof.

We begin by showing that $\vec_{m,n}^{-1}(\vec\bA)=\bA$ for any $\bA\in\mathcal M_{m\times n}$. Let $\be_i\in\mathbb R^{n}$ be the $i$th column of $\bI_{n}$. Then, $\ba_i=\bA\be_i$ is the $i$th column of $\bA$ and we can write $\bA=\sum_{i=1}^n\ba_i\be_i^\top$ and $\bI_n=\sum_{i=1}^n\be_i\be_i^\top$. By making use of the usual properties of the vec operator and the Kronecker product we have
\begin{align*}
\vec_{m,n}^{-1}(\vec\bA)&=\{(\vec^\top\bI_n)\otimes\bI_m\}(\bI_n\otimes \vec\bA)
 =\sum_{i=1}^n(\be_i^\top\otimes\be_i^\top\otimes \bI_m)(\bI_n\otimes\vec\bA)\\
&=\sum_{i=1}^n(\be_i^\top\bI_n)\otimes\{(\be_i^\top\otimes \bI_m)\vec\bA\}
 =\sum_{i=1}^n\be_i^\top\otimes(\bA\be_i)=\sum_{i=1}^n\ba_i\be_i^\top=\bA.
\end{align*}

On the other hand, we need to show that $\vec\{\vec_{m,n}^{-1}(\ba)\}=\ba$ for any $\ba\in\mathbb R^{mn}$. Using Theorem 3.10 in \cite{MN19},
$$\vec(\bI_n\otimes\ba)=[\{(\bI_n\otimes\mat K_{1,n})\vec\bI_n\}\otimes \bI_{mn}]\ba=\{(\vec\bI_n)\otimes\bI_{mn}\}\ba,$$
since the commutation matrix satisfies $\mat K_{1,n}=\bI_n$. Therefore,
\begin{align*}
\vec\{\vec_{m,n}^{-1}(\ba)\}&=\vec[\{(\vec^\top\bI_n)\otimes\bI_m\}(\bI_n\otimes \ba)]
 =\{\bI_n\otimes(\vec^\top\bI_n)\otimes\bI_n\}\vec(\bI_n\otimes \ba)\\
&=\{\bI_n\otimes(\vec^\top\bI_n)\otimes\bI_n\}\{(\vec\bI_n)\otimes\bI_{mn}\}\ba\\
&=([(\bI_n\otimes\vec^\top\bI_n)\{(\vec\bI_n)\otimes\bI_n\}]\otimes\bI_m)\ba.
\end{align*}
To finish the proof, it suffices to establish the identity
\begin{equation}\label{eq:Ivec}
(\bI_n\otimes\vec^\top\bI_n)\{(\vec\bI_n)\otimes\bI_n\}=\bI_n.
\end{equation}
This can be shown by writing $\bI_n=\sum_{i=1}^n\be_i\be_i^\top$ as above, so that
\begin{align*}
(\bI_n\otimes\vec^\top\bI_n)\{(\vec\bI_n)\otimes\bI_n\}&=\sum_{i,j=1}^n(\bI_n\otimes\be_i^\top\otimes\be_i^\top)(\be_j\otimes\be_j\otimes\bI_n)\\
&=\sum_{i,j=1}^n\be_j\otimes(\be_i^\top\be_j)\otimes\be_i^\top=\sum_{i=1}^n\be_i\otimes\be_i^\top=\bI_n,
\end{align*}
thus yielding Equation~\eqref{eq:Ivec}.
\end{proof}

\begin{proof}[Proof of Theorem \ref{thm:ident-vec}]
(i) The text preceding the theorem statement establishes this.

(ii) Let $\ba_i$ be the $i$th column of $\bA \in \mathcal{M}_{d^r \times p}$, $i=1,\dots, p$. Since
$\D^{\otimes r}f_i(\bc)^\top\bu^{\otimes r}=\diff^rf_i(\bc;\bu)=\ba_i^\top\bu^{\otimes r}$ for all $\bu\in\mathbb R^d$, then, by a component-wise application of Theorem~\ref{thm:ident}(ii), we have $\D^{\otimes r}f_i(\bc)=\S_{d,r} \ba_i$. That is,
$$\D^{\otimes r} \bbf(\bc)=\begin{bbmatrix} \S_{d,r}\ba_1\\ \hline \vdots \\ \hline \S_{d,r}\ba_p\end{bbmatrix}=(\bI_p\otimes\S_{d,r})\begin{bbmatrix} \ba_1\\ \hline \vdots \\ \hline \ba_p\end{bbmatrix}=(\bI_p\otimes\S_{d,r})\vec\bA.
$$
The second part of (ii) follows immediately if we set $\bA=\vec_{d^r,p}^{-1}\ba$ since $\ba = \vec \bA$.
\end{proof}

\subsection{Iterative identification} \label{app:ident-iter}

\begin{proof}[Proof of Theorem~\ref{thm:ident-iter}]
First, consider a real-valued function $f\colon\mathbb{R}^d \to \mathbb{R}$. Denote $\bm g = \D^{\otimes (r-1)} f$. Since $\bm g\colon\mathbb R^d\to\mathbb R^{d^{r-1}}$ is a vector-valued function with $\D\bg=\D^{\otimes r}f$, applying Theorem~\ref{thm:ident-vec}(ii) to the first differential of $\bm g$ we have that, if $\bB \in \mathcal{M}_{d \times d^{r-1}}$ satisfies $\diff \bm g(\bc;\bu) = \bB^\top \bu$ for all $\bu \in \mathbb{R}^d$, then it must be $\D^{\otimes r}f(\bc)=\D \bm g(\bc) = \vec \bB$.

For a vector-valued $\bbf\colon\mathbb{R}^d \to \mathbb{R}^p$, with components $\bbf = (f_1, \dots, f_p)$, suppose that $\bB\in\mathcal M_{d \times pd^{r-1}}$ satisfies $\diff \{\D^{\otimes (r-1)} \bbf(\bc;\bu)\} =\mat B^\top \bu$ for all $\bu\in\mathbb R^d$. If we write $\bB^\top$ as $p$ stacked block matrices $\bB_1^\top,\dots,\bB_p^\top \in\mathcal M_{d^{r-1}\times d}$, then the previous assumption entails that
$$
 \begin{bbmatrix} \diff \{\D^{\otimes (r-1)} f_1\}(\bc;\bu) \\ \hline \vdots \\ \hline \diff \{\D^{\otimes (r-1)} f_p\}(\bc;\bu)\end{bbmatrix} =\diff \{\D^{\otimes (r-1)} \bbf\}(\bc;\bu) =
\bB^\top \bu =  \begin{bbmatrix} \bB_1^\top \\ \hline \vdots \\ \hline \bB_p^\top \end{bbmatrix} \bu
= \begin{bbmatrix} \bB_1^\top \bu \\ \hline \vdots \\ \hline \bB_p^\top \bu \end{bbmatrix}
$$
so that $\diff \{\D^{\otimes (r-1)} f_i\}(\bc;\bu)= \bB_i^\top \bu$ for all $\bu \in \mathbb{R}^d$, for $i=1,\dots,p$. By the above argument for scalar-valued functions,
this implies that $\D^{\otimes r}f_i(\bc)= \vec \bB_i$ for $i=1,\dots,p$; that is,
\begin{align*}
\D^{\otimes r} \bbf(\bc) = \begin{bbmatrix} \D^{\otimes r} f_1(\bc) \\ \hline \vdots \\ \hline \D^{\otimes r} f_p(\bc) \end{bbmatrix}
= \begin{bbmatrix} \vec \bB_1 \\ \hline \vdots \\ \hline \vec \bB_p \end{bbmatrix}
&= \vec  \left[\bgroup\def\arraystretch{0.8}\begin{array}{@{}c|c|c@{}}  \bB_1  & \dots &  \bB_p \end{array}\egroup\right]  =\vec \bB.
\qedhere
\end{align*}
\end{proof}

\subsection{Product and chain rules} \label{app:prod-chain}

\begin{proof}[Proof of Theorem~\ref{thm:deriv-const}]
(i) Since $\diff^rf(\bc;\bu)=\mathsf D^{\otimes r}f(\bc)^\top\bu^{\otimes r}$ for all $\bu\in\mathbb R^d$, it immediately follows that
$\diff^r(\ba f)(\bc;\bu)= \ba \diff^rf(\bc;\bu)=\ba \mathsf D^{\otimes r}f(\bc)^\top\bu^{\otimes r}$ for all $\bu\in\mathbb R^d$. So from Theorem~\ref{thm:ident-vec} its derivative is $ \D^{\otimes r}(\ba f)(\bc) = \vec \{\S_{d,r} \mathsf D^{\otimes r}f(\bc) \ba^\top\} = \vec \{\mathsf D^{\otimes r}f(\bc) \ba^\top\}=\ba\otimes\mathsf D^{\otimes r}f(\bc)$.

(ii) Let us write $\bB =\vec_{d^r,p}^{-1}\D^{\otimes r}f(\bc)\in \mathcal{M}_{d^r \times p}$ so that $\diff^r \bbf(\bc;\bu)=\bB^\top\bu^{\otimes r}$ for all $\bu\in\mathbb R^d$, which by Theorem~\ref{thm:ident-vec} implies $\mathsf D^{\otimes r} \bbf(\bc) = \vec (\S_{d,r} \bB)$.
For a vector $\ba\in\mathbb R^q$, then it is easy to check that $\diff^r (\ba \otimes \bbf)(\bc;\bu)=\ba \otimes \diff^r \bbf(\bc;\bu) = \ba \otimes (\bB^\top \bu^{\otimes r}) = (\ba \otimes \bB^\top) \bu^{\otimes r}$ for all $\bu\in\mathbb R^d$. Therefore, $\D^{\otimes r} (\ba \otimes \bbf)(\bc) = \vec \{\S_{d,r} (\ba^\top \otimes \bB)\} = \vec \{\ba^\top \otimes (\S_{d,r}\bB)\} = \ba \otimes \vec (\S_{d,r}\bB)= \ba \otimes \D^{\otimes r}  \bbf(\bc)$.

For a matrix $\bA\in\mathcal M_{q\times p}$, then it can be shown that $\diff^r( \bA \bbf)(\bc;\bu)=\bA\diff^r\bbf(\bc;\bu)  = \bA \bB^\top\bu^{\otimes r}$ for all $\bu\in\mathbb R^d$. Therefore, $\D^{\otimes r} (\bA \bbf)(\bc) = \vec (\S_{d,r} \bB\bA^\top) = (\bA \otimes \bI_{d^r}) \vec (\S_{d,r} \bB) = (\bA \otimes \bI_{d^r})  \D^{\otimes r}  \bbf(\bc)$.
\end{proof}

\begin{proof}[Proof of Theorem~\ref{thm:deriv-prod}]
(i) Note that for $p=q=1$ the Kronecker product coincides with the usual product. Hence, part (i) immediately follows from part (ii).

(ii) The first goal is to show that the differential of the Kronecker product satisfies
\begin{equation}\label{eq:drfg}
\diff^r (\bbf \otimes \bg)(\bc;\bu) = \sum_{j=0}^r\binom{r}{j} \diff^{r-j} \bbf(\bc;\bu) \otimes \diff^j \bg(\bc;\bu),
\end{equation}
where it is understood that $\diff^0\bbf(\bc;\bu)=\bbf(\bc)$ and $\diff^0\bg(\bc;\bu)=\bg(\bc)$. The proof of this fact follows closely the commonly exhibited reasoning for binomial expansions: this is true for $r=1$ since the usual Kronecker product rule for the first differential yields  $\diff (\bbf \otimes \bg)(\bc;\bu) =  \diff \bbf(\bc;\bu) \otimes \bg (\bc) + \bbf(\bc) \otimes \diff \bg(\bc;\bu)$ \citep[][p. 164]{MN19}. Then, Equation (\ref{eq:drfg}) follows by induction on $r$.

From Equation~\eqref{eq:drfg} and Theorem \ref{thm:ident-vec}(i) we obtain
\begin{align*}
\diff^r (\bbf \otimes \bg)(\bc;\bu)
&= \sum_{j=0}^r \binom{r}{j} [\{\vec^{-1}_{d^{r-j},p}\D^{\otimes r-j}\bbf(\bc)\}^\top \bu^{\otimes r-j}  \otimes \{\vec^{-1}_{d^{j},q}\D^{\otimes j}\bg(\bc)\}^\top \bu^{\otimes j} ] \\
&= \sum_{j=0}^r \binom{r}{j} [\{\vec^{-1}_{d^{r-j},p}\D^{\otimes r-j}\bbf(\bc)\}  \otimes \{\vec^{-1}_{d^{j},q}\D^{\otimes j}\bg(\bc)\}]^\top \bu^{\otimes r}.
\end{align*}
The desired formula then follows from Theorem \ref{thm:ident-vec}(ii).
\end{proof}

\begin{proof}[Proof of Theorem~\ref{thm:deriv-comp}]
(i) Reasoning as in \cite{Spi05}, it can be shown that $\D^{\otimes r}(g\circ \bbf)(\bc)$ depends only on the vectors $\D^{\otimes k}g\{\bbf(\bc)\}$ and $\D^{\otimes k}\bbf(\bc)$ for $k=1,\dots,r$, so that it suffices to show the theorem statement for any two functions $\tilde\bbf$ and $\tilde g$ that share these derivatives with $\bbf$ and $g$, respectively.

Without loss of generality we let $\bc=\mathbf 0$ and $\bbf(\bc)=\mathbf0$, and  we consider $\tilde\bbf(\bx)=\sum_{\ell=1}^r\{\bI_p\otimes(\bx^\top)^{\otimes\ell}\}\bm v_\ell$ and $\tilde g(\by)=\sum_{k=1}^r\bm w_k^\top\by^{\otimes k}$, for the given vectors $\bm v_\ell=\D^{\otimes \ell}\bbf(\mathbf 0)/\ell!\in\mathbb R^{pd^\ell}$ and $\bm w_k=\D^{\otimes k}g(\mathbf 0)/k!\in\mathbb R^{p^k}$. Then, by Corollary 3.1 in \citet{Hol85b}, the $k$th derivative of $\tilde\bbf$ at $\bx=\mathbf 0$ is $\D^{\otimes k}\tilde\bbf(\mathbf 0)=k!(\bI_p\otimes\S_{d,k})\bm v_k=\D^{\otimes k}\bbf(\mathbf 0)$, where the last equality is due to the fact that $\S_{d,k}\D^{\otimes k}f_i(\mathbf 0)=\D^{\otimes k}f_i(\mathbf 0)$ for $i=1,\dots, p$. Similarly, by Theorem 3.1 in \citet{Hol85b} we have $\D^{\otimes k}\tilde g(\mathbf 0)=k!\S_{p,k}\bm w_k=\D^{\otimes k}g(\mathbf 0)$ for $k=1,\dots,r$.

If we could express $\tilde{g}\circ\tilde{\bbf}$ as an $r$th order polynomial $(\tilde{g}\circ\tilde{\bbf})(\bx)=\sum_{\ell=1}^r(\bx^\top)^{\otimes \ell}\bb_\ell$ for vectors $\bb_\ell\in\mathbb R^{d^\ell}$, then we would have $\D^{\otimes r}(\tilde g\circ\tilde \bbf)(\mathbf 0)=r!\S_{d,r}\bb_r$.
So the proof would be complete if we could show that we can take the $r$th coefficient as
\begin{align}
\bb_r=&\frac1{r!}\sum_{k=1}^r\sum_{\bbm\in\mathcal J_{k,r}}\pi_\bbm k!(\bm w_k^\top\otimes\bI_{d^r})\bigotimes_{\ell=1}^r(\ell!\bm v_\ell)^{\otimes m_\ell}\nonumber\\&=\sum_{k=1}^r\sum_{\bbm\in\mathcal J_{k,r}}\frac{k!}{m_1!\cdots m_r!}(\bm w_k^\top\otimes\bI_{d^r})\bigotimes_{\ell=1}^r\bm v_\ell^{\otimes m_\ell} \in \mathbb{R}^{d^r}. \label{eq:ar}
\end{align}

In order to show Equation~\eqref{eq:ar}, denote by $\mathcal Q_{k,r}$ the set of partitions of $k$ into $r$ parts; that is, $\mathcal Q_{k,r}=\{\bbm\in\mathbb N_0^r\colon|\bbm|=k\}$. Define the symmetrizer matrix in dimension $p$ with respect to some $\bbm\in\mathcal Q_{k,r}$ as the only matrix $\S_{p,\bbm}\in\mathcal M_{p^k\times p^k}$ such that, for any $\bx_1,\dots,\bx_r\in\mathbb R^p$, the product $\frac{k!}{m_1!\cdots m_r!}\S_{p,\bbm}\bigotimes_{\ell=1}^r\bx_\ell^{\otimes m_r}$ equals the sum over all distinct terms of type $\bigotimes_{\ell=1}^r\by_\ell$ for which $m_1$ of the factors are equal to $\bx_1$, $m_2$ of the factors are equal to $\bx_2$, etc. Then, using the multinomial expansion in Equation (2.6) of \citet{Hol85b}, we obtain
\begin{align*}
(\tilde{g}\circ\tilde{\bbf})(\bx)&=\sum_{k=1}^r\bm w_k^\top\{\tilde{\bbf}(\bx)\}^{\otimes k}=\sum_{k=1}^r\bm w_k^\top\Big[\sum_{\ell=1}^r\{\bI_p\otimes(\bx^\top)^{\otimes\ell}\}\bm v_\ell\Big]^{\otimes k}\\
&=\sum_{k=1}^r\bm w_k^\top\sum_{\bbm\in\mathcal Q_{k,r}}\frac{k!}{m_1!\cdots m_r!}\S_{p,\bbm}\bigotimes_{\ell=1}^r[\{\bI_p\otimes(\bx^\top)^{\otimes\ell}\}\bm v_\ell]^{\otimes m_\ell}\\
&=\sum_{k=1}^r\sum_{\bbm\in\mathcal Q_{k,r}}\frac{k!}{m_1!\cdots m_r!}\bm w_k^\top\big\{\bI_{p^k}\otimes(\bx^\top)^{\otimes\sum_{\ell=1}^r\ell m_\ell}\big\}\bigotimes_{\ell=1}^r\bm v_\ell^{\otimes m_\ell}\\
&=\sum_{k=1}^r\sum_{\bbm\in\mathcal Q_{k,r}}\frac{k!}{m_1!\cdots m_r!}(\bx^\top)^{\otimes\sum_{\ell=1}^r\ell m_\ell}\big(\bm w_k^\top\otimes\bI_{d^{\sum_{\ell=1}^r\ell m_\ell}}\big)\bigotimes_{\ell=1}^r\bm v_\ell^{\otimes m_\ell},
\end{align*}
where we used that $\S_{p,\bbm}\bm w_k=\bm w_k$ by the definition of $\bm w_k$. This last equation shows that $\tilde{g}\circ\tilde{\bbf}$ is an $r$th order polynomial in $\bx$, where the vector that multiplies $(\bx^\top)^{\otimes r}$ is precisely $\bb_r$ in Equation~\eqref{eq:ar}, as desired.

(ii) If $\bg$ has components $(g_1,\dots,g_q)$, then $\bg\circ\bbf$ has components $(g_1\circ \bbf,\dots,g_q\circ\bbf)$, so the vector $\D^{\otimes r}(\bg\circ\bbf)(\bc)\in\mathbb R^{qd^r}$ is formed by stacking $\D^{\otimes r}(g_1\circ\bbf)(\bc),\dots,\D^{\otimes r}(g_q\circ\bbf)(\bc)$. By part (i),
$$\D^{\otimes r}(g_j\circ \bbf)(\bc) = \sum_{\bbm\in\mathcal J_{r}}\pi_{\bbm}\big[\D^{\otimes |\bbm|}g_j\{\bbf(\bc)\}^\top\otimes\S_{d,r}\big]\bigotimes_{\ell=1}^r\{\D^{\otimes \ell}\bbf(\bc)\}^{\otimes m_\ell}\in\mathbb R^{d^r}$$
for all $j=1,\dots,q$, so
$$\D^{\otimes r}(\bg\circ\bbf)(\bc)=\sum_{\bbm\in\mathcal J_{r}}\pi_{\bbm}\left( \begin{bbmatrix} \D^{\otimes |\bbm|}g_1\{\bbf(\bc)\}^\top \\ \hline \vdots \\ \hline \D^{\otimes |\bbm|}g_q\{\bbf(\bc)\}^\top \end{bbmatrix}\otimes\S_{d,r}\right)\bigotimes_{\ell=1}^r\{\D^{\otimes \ell}\bbf(\bc)\}^{\otimes m_\ell}.$$
The proof is completed by noting that
\begin{align*}
\begin{bbmatrix} \D^{\otimes |\bbm|}g_1\{\bbf(\bc)\}^\top \\ \hline \vdots \\ \hline \D^{\otimes |\bbm|}g_q\{\bbf(\bc)\}^\top \end{bbmatrix}&=
\left[\bgroup\def\arraystretch{0.8}\begin{array}{@{}c|c|c@{}} \D^{\otimes |\bbm|} g_1 \{\bbf(\bc)\} & \cdots &  \D^{\otimes |\bbm|} g_q \{\bbf(\bc)\}\end{array}\egroup\right]^\top\\&=\big[\vec_{p^{|\bbm|},q}^{-1} \D^{\otimes |\bbm|} \bg\{\bbf(\bc)\}\big]^\top.
\qedhere
\end{align*}
\end{proof}

\subsection{Matrix functions} \label{app:matrix}

The proofs in this section require a certain level of familiarity with established matrix algebra results, see \citet{Sch17,MN19} for an overview.

\begin{proof}[Proof of Lemma~\ref{lem:vec-matrix-prod}]
We begin with $\vec \{(\bA \bB)^r \bA\} = \{ \bA^\top \otimes (\vec^\top \bA)^{\otimes r-1} \otimes \bA\} (\vec \bB)^{\otimes r}$ for square matrices $\bA, \bB$.
This holds for $r=1$ since $\vec (\bA \bB \bA) = (\bA^\top \otimes \bA) \vec \bB$. The $(r+1)$th iteration is
\begin{align*}
\vec \{(\bA \bB)^{r+1} \bA\}
&= \vec \{\bA \bB (\bA \bB)^r \bA\} = (\bA^\top \otimes \bA) \vec \{ \bB (\bA\bB)^r\} \\
&= (\bA^\top \otimes \bA) \{\bB^\top \otimes (\vec^\top \bB)^{\otimes r-1} \otimes \bB\} (\vec \bA)^{\otimes r}
\end{align*}
from the induction hypothesis.
Continuing, using the known identities for expanding the vec of a Kronecker product as a Kronecker product of vectorized matrices \citep[Chapter~3.7]{MN19},
\begin{align*}
\vec &\{(\bA \bB)^{r+1} \bA\} \\
&= \{(\vec^\top \bA)^{\otimes r} \otimes \bA^\top \otimes \bA)\} \vec \{ \bB^\top \otimes (\vec^\top \bB)^{\otimes r-1} \otimes \bB\} \\
&= \{(\vec^\top \bA)^{\otimes r} \otimes \bA^\top \otimes \bA)\} (\bI_d \otimes \K_{d^{2r-1}, d} \otimes \bI_d) \{ \vec (\bB^\top) \otimes \vec [(\vec^\top \bB)^{\otimes r-1}  \otimes \bB]\} \\
&= \{(\vec^\top \bA)^{\otimes r} \otimes \bA^\top \otimes \bA)\} (\bI_d \otimes \K_{d^{2r-1}, d} \otimes \bI_d) \{ \vec (\bB^\top) \otimes (\vec \bB)^{\otimes r}\} \\
&= \{(\vec^\top \bA)^{\otimes r} \otimes \bA^\top \otimes \bA)\} (\bI_d \otimes \K_{d^{2r-1}, d} \otimes \bI_d) (\K_{d,d} \otimes \bI_{d^{2r}}) (\vec \bB)^{\otimes r+1} \\
&= \{(\vec^\top \bA)^{\otimes r} \otimes \bA^\top \otimes \bA)\} (\K_{d^{2r},d} \otimes \bI_d) (\vec \bB)^{\otimes r+1} \\
&= \{ \bA^\top \otimes (\vec^\top \bA)^{\otimes r} \otimes \bA)\}  (\vec \bB)^{\otimes r+1},
\end{align*}
where the last two equalities follow from $\K_{d^{2r},d} \otimes \bI_d = (\bI_d \otimes \K_{d^{2r-1},d} \otimes \bI_d) (\K_{d,d} \otimes \bI_{d^{2r}})$ from \citet[Theorem~8.29]{Sch17}, and $\K_{d,d^{2r}} \{(\vec \bA)^{\otimes r} \otimes \bA\}=\bA \otimes (\vec \bA)^{\otimes r}$  from \citet[Theorem~8.26(d)]{Sch17}. This finishes the induction argument.

If we replace the first and last $\bA$ in the product $\bA \bB(\bA \bB)^r \bA$ by $\bC$ and ${\mat D}$ so that product remains conformable, then we have $\vec \{\bC \bB (\bA \bB)^r {\mat D}\} = \{{\mat D}^\top \otimes (\vec^\top \bA)^{\otimes r} \otimes  \bC\}  (\vec \bB)^{\otimes r+1}$.  Exchanging the matrices $\bA$ and $\bC$ yields the result.
\end{proof}

\begin{proof}[Proof of Theorem~\ref{thm:deriv-monomial}]
(i) Since the Kronecker product is not commutative, the first differential of $(\vec\bX)^{\otimes k}$ can be decomposed as
\begin{align*}
\diff  \{(\vec\bX)^{\otimes k}\}&=\sum_{j=1}^k\big\{(\vec\bX)^{\otimes j-1}\otimes(\diff\vec\bX)\otimes(\vec\bX)^{\otimes k-j}\big\}\\
&=\Big\{\sum_{j=1}^k(\vec\bX)^{\otimes j-1}\otimes\bI_{b}\otimes(\vec\bX)^{\otimes k-j}\Big\}\diff\vec\bX\\
&=\Big(\sum_{j=1}^k\mat K_{b^{j},b^{k-j}}\Big)\big\{(\vec\bX)^{\otimes k-1}\otimes\bI_{b}\big\}\diff\vec\bX\\
&=\mat\Lambda_{b,k}\big\{(\vec\bX)^{\otimes k-1}\otimes\bI_{b}\big\}\diff\vec\bX,
\end{align*}
where $b=cd$ and $\mat\Lambda_{b,k}=\sum_{j=1}^k\mat K_{b^{j},b^{k-j}}\in\mathcal M_{b^{k}\times b^{k}}$.
If we repeatedly iterate this first differential, then for $1\leq r\leq k$, the $r$th differential is
$$\diff^r\{(\vec\bX)^{\otimes k}\}=\mat\Gamma_{b,k,r}\big\{(\vec\bX)^{\otimes k-r}\otimes\bI_{b^{r}}\big\}(\diff\vec\bX)^{\otimes r},$$
where $\mat\Gamma_{b,k,r}=\prod_{j=0}^{r-1}(\mat\Lambda_{b,k-j}\otimes\bI_{b^{j}}) \in\mathcal M_{b^{k}\times b^{k}}$ is the product of matrices given in the statement of Theorem \ref{thm:deriv-monomial}. Applying Theorem~\ref{thm:ident-vec}, the $r$th derivative is
\begin{align*}
\D^{\otimes r} \{(\vec\bX)^{\otimes k}\}&=(\bI_{b^{k}}\otimes\S_{b,r})\vec\big[\big\{(\vec^\top\bX)^{\otimes k-r}\otimes\bI_{b^{r}}\big\}\mat\Gamma_{b,k,r}^\top\big]\\
&=(\bI_{b^{k}}\otimes\S_{b,r})(\mat\Gamma_{b,k,r}\otimes\bI_{b^{r}})\big\{(\vec\bX)^{\otimes k-r}\otimes\vec\bI_{b^{r}}\big\}\\
&=(\mat\Gamma_{b,k,r}\otimes\S_{b,r})\big\{(\vec\bX)^{\otimes k-r}\otimes\vec\bI_{b^{r}}\big\}.
\end{align*}

(ii) This follows from (i) with $c=1$, so that $b=d$.

(iii) For the matrix product $\bX^k$, we first appeal to Lemma~\ref{lem:vec-matrix-prod} to obtain $\vec (\bX^k) = \{\bI_d \otimes (\vec^\top \bI_d)^{\otimes k-1} \otimes \bI_d\} (\vec \bX)^{\otimes k}$.
Applying Theorem~\ref{thm:deriv-const}, then $\D^{\otimes r} \vec (\bX^k) = \{\bI_d \otimes (\vec^\top \bI_d)^{\otimes k-1} \otimes \bI_{d^{2r+1}}\} \D^{\otimes r} \{(\vec \bX)^{\otimes k}\}$, and substituting $\D^{\otimes r} \{(\vec \bX)^{\otimes k}\}$ from (i) above, it follows that
\begin{align*}
\D^{\otimes r} \vec (\bX^k)
&=  \{\bI_d \otimes (\vec^\top \bI_d)^{\otimes k-1} \otimes \bI_{d^{2r+1}}\} (\mat\Gamma_{d^2,k,r}\otimes\S_{d^2,r})\big\{(\vec\bX)^{\otimes k-r}\otimes\vec\bI_{d^{2r}}\big\}.
\end{align*}

(iv)
The derivatives for the trace are straightforward to compute from the identity $\tr (\bX^k) = (\vec^\top \bI_d) \vec (\bX^k)$. Via Theorem~\ref{thm:deriv-const}, and using part (iii),
\begin{align*}
\D^{\otimes r} \tr (\bX^k)
&= \D^{\otimes r} \{ (\vec^\top \bI_d) (\vec \bX^k)\} = \{(\vec^\top \bI_d) \otimes \bI_{d^{2r}}\} \D^{\otimes r} \vec (\bX^k)\\
&=[\{(\vec^\top \bI_d)\mat\Upsilon_{d,k,r}\}\otimes\S_{d^2,r}]\big\{(\vec\bX)^{\otimes k-r}\otimes\vec\bI_{d^{2r}}\big\}.
\end{align*}

(v) Finally, we prove that for $r=1$ the above formula coincides with the simpler expression that can be obtained from the differential given in \citet[p. 186]{MN19}. Let ${\bv_k}=\D \tr (\bX^k)$ be as given in Theorem~\ref{thm:deriv-monomial}(iv) with $r=1$. The goal is to show that $\bv_k= k \vec \{(\bX^\top)^{k-1}\}$. This holds for $k=1$, since $\mat\Upsilon_{d,1,1}=\bI_{d^2}$ so that $\bv_1=\{(\vec^\top \bI_d)\otimes\bI_{d^2}\}\vec\bI_{d^2}=\vec\bI_d$. Inductively, assuming that the result is true for some $k$, the next iteration is provided by
$$\bv_{k+1} = \D \tr (\bX^k \bX) = [\vec^\top \{(\bX^\top)^k\} \otimes \bI_{d^2}] \D (\vec \bX) + \{\vec^\top (\bX^\top) \otimes \bI_{d^2}\} \D (\vec \bX^k)$$
from Theorem~\ref{thm:deriv-const}.
Applying Theorem~\ref{thm:deriv-monomial}(iii) with $r=1$ to $\vec\bX$, then we can assert that the first term of $\bv_{k+1}$ is $[\vec^\top \{(\bX^\top)^k\} \otimes \bI_{d^2}] \vec \bI_{d^2} = \vec \{(\bX^\top)^k\}$. Applying Theorem~\ref{thm:deriv-monomial}(iii) to $\vec\bX^k$, then the second term is
\begin{align*}
[\vec^\top (\bX^\top) \otimes \bI_{d^2}] (\mat\Upsilon_{d,k,1}&\otimes\bI_{d^2})\big\{(\vec\bX)^{\otimes k-1}\otimes\vec\bI_{d^2}\big\}\\
 &= (\bX \otimes \bI_d)  [\{(\vec^\top \bI_{d}) \mat\Upsilon_{d,k,1}\}\otimes\bI_{d^2}]\big\{(\vec\bX)^{\otimes k-1}\otimes\vec\bI_{d^2}\big\}\\
 &= (\bX \otimes \bI_d) \bv_k = k (\bX \otimes \bI_d) \vec \{(\bX^\top)^{k-1}\},
\end{align*}
where the last two equalities follow from Theorem~\ref{thm:deriv-monomial}(iv) and the induction hypothesis. Thus the second term of $\bv_{k+1}$ is $k\vec \{(\bX^\top)^{k}\}$. Therefore $\bv_{k+1} = (k+1) \vec \{(\bX^\top)^{k}\}$ which completes the induction.
\end{proof}

\begin{proof}[Proof of Theorem~\ref{thm:matrix-inv}]
We begin with the matrix differential
\begin{equation}\label{eq:matrix-inv}
\diff^r (\bX^{-1}) =  (-1)^r r! (\bX^{-1} \diff \bX)^r \bX^{-1}
\end{equation}
which was stated in \citet[p.~169]{MN19}.
%
%
Applying the vectorization operator to both sides of Equation~\eqref{eq:matrix-inv}, and using Lemma~\ref{lem:vec-matrix-prod} to expand the right hand side, results in
$$\diff^r \vec (\bX^{-1}) =  (-1)^r r!\{(\bX^{-1})^\top\otimes(\vec^\top\bX^{-1})^{\otimes r-1}\otimes\bX^{-1}\}(\diff \vec\bX)^{\otimes r}.$$
Applying Theorem~\ref{thm:ident-vec}, we have
\begin{align*}
\D^{\otimes r} (\vec \bX^{-1})
&= (-1)^r r! (\bI_{d^2} \otimes \S_{d^2,r}) \vec \{\bX^{-1} \otimes (\vec \bX^{-1})^{\otimes r-1} \otimes (\bX^{-1})^\top\} \\
&= (-1)^r r! (\bI_{d^2} \otimes \S_{d^2,r}) \K_{d^2, d^{2r}} \vec \{(\bX^{-1})^\top \otimes (\vec^\top \bX^{-1})^{\otimes r-1} \otimes \bX^{-1}\} \\
&= (-1)^r r! (\bI_{d^2} \otimes \S_{d^2,r}) (\bI_d \otimes \K_{d,d^{2r}})(\K_{d,d^{2r}} \otimes \bI_d) (\K_{d^{2r},d} \otimes \bI_d) (\vec \bX^{-1})^{\otimes r+1} \\
&= (-1)^r r! (\bI_{d^2} \otimes \S_{d^2,r}) (\bI_d \otimes \K_{d,d^{2r}}) (\vec \bX^{-1})^{\otimes r+1}
\end{align*}
using a similar calculation to Lemma~\ref{lem:vec-matrix-prod}, and that $\K_{m,n} \K_{n,m} = \bI_{mn}$ \cite[Theorem~8.24]{Sch17}.
\end{proof}

\begin{proof}[Proof of Theorem~\ref{thm:matrix-det}]
	(i) \citet[][p. 180]{MN19} provided the differential
	$$\diff^r\log|\bX|=(-1)^{r-1}(r-1)!\tr\{(\bX^{-1}\diff\bX)^r\}.$$
	And, using Lemma \ref{lem:vec-matrix-prod},
	\begin{align*}
		\tr\{(\bX^{-1}\diff\bX)^r\}&=(\vec^\top\bI_d)\vec\{(\bX^{-1}\diff\bX)^r\}\\
		&=(\vec^\top\bI_d)\{\bI_d\otimes(\vec^\top\bX^{-1})^{\otimes r-1}\otimes\bX^{-1}\}(\diff\vec\bX)^{\otimes r}\\
		&=\vec^\top\big\{(\vec\bX^{-1})^{\otimes r-1}\otimes(\bX^{-1})^\top\big\}(\diff\vec\bX)^{\otimes r}.
	\end{align*}
	Therefore, applying Theorem \ref{thm:ident},
	\begin{align*}
		\D^{\otimes r}\log|\bX|&=(-1)^{r-1}(r-1)!\S_{d^2,r}\vec\big\{(\vec\bX^{-1})^{\otimes r-1}\otimes(\bX^{-1})^\top\big\}\\
		&=(-1)^{r-1}(r-1)!\S_{d^2,r}(\mat K_{d,d^{2r-2}}\otimes\bI_d)\big[(\vec\bX^{-1})^{\otimes r-1}\otimes\vec\{(\bX^{-1})^\top\}\big]\\
		&=(-1)^{r-1}(r-1)!\S_{d^2,r}(\mat K_{d,d^{2r-2}}\otimes\bI_d)(\bI_{d^{2r-2}}\otimes\mat K_{dd})(\vec\bX^{-1})^{\otimes r}\\
		&=(-1)^{r-1}(r-1)!\S_{d^2,r}\mat K_{d,d^{2r-1}}(\vec\bX^{-1})^{\otimes r},
	\end{align*}
	where the last equality follows from a transposed form of Theorem 8.29 in \cite{Sch17}.

(ii) We can write $|\bX|=(g\circ f)(\bX)$, where $f(\bX)=\log|\bX|$ and $g(t)=e^t$, so that $\D^{\otimes m} g\{f(\bX)\}=|\bX|$ for all $m\in\mathbb N$. Then, combining Theorem \ref{thm:matrix-det}(i) with Fa\`{a} di Bruno's formula from Theorem \ref{thm:deriv-comp}, we obtain
\begin{align*}
\D^{\otimes r}|\bX|&=\S_{d^2,r}|\bX|\sum_{\bbm\in\mathcal J_{r}}\pi_{\bbm}\bigotimes_{\ell=1}^r\{\D^{\otimes \ell} \log |\bX|\}^{\otimes m_\ell}\\
&=\S_{d^2,r}|\bX|\sum_{\bbm\in\mathcal J_{r}}\pi_{\bbm}\bigotimes_{\ell=1}^r\big\{(-1)^{\ell-1}(\ell-1)!\S_{d^2,\ell}\mat K_{d,d^{2\ell-1}}(\vec\bX^{-1})^{\otimes \ell}\big\}^{\otimes m_\ell}\\
&=\S_{d^2,r}|\bX|\sum_{\bbm\in\mathcal J_{r}}(-1)^{r-|\bbm|}\frac{r!}{\prod_{\ell=1}^r(m_\ell!\ell^{m_\ell})}\bigotimes_{\ell=1}^r\big(\S_{d^2,\ell}\mat K_{d,d^{2\ell-1}}\big)^{\otimes m_\ell}(\vec\bX^{-1})^{\otimes r}.
\end{align*}
Finally, we assert that $\S_{d^2,r}\bigotimes_{\ell=1}^r\S_{d^2,\ell}^{\otimes m_\ell}=\S_{d^2,r}$: the reason for this is that $\S_{d^2,r}$ makes the $r$th order Kronecker product commutative, and for any arbitrary $\bv_1,\dots,\bv_r\in\mathbb R^{d^2}$ the product $\bigotimes_{\ell=1}^r\S_{d^2,\ell}^{\otimes m_\ell}\bigotimes_{\ell=1}^r\bv_\ell$ is the mean of a number of terms which are all of the form $\bigotimes_{\ell=1}^r\bv_{\sigma(\ell)}$ for some $\sigma\in\mathcal P_r$, so that eventually $\S_{d^2,r}\bigotimes_{\ell=1}^r\S_{d^2,\ell}^{\otimes m_\ell}\bigotimes_{\ell=1}^r\bv_\ell=\S_{d^2,r}\bigotimes_{\ell=1}^r\bv_\ell$. Since the $\bv_1,\dots,\bv_r$ are arbitrary, this implies that $\S_{d^2,r}\bigotimes_{\ell=1}^r\S_{d^2,\ell}^{\otimes m_\ell}=\S_{d^2,r}$, as desired.
\end{proof}

\subsection{Vectorized moments} \label{app:moments}

\begin{lemma}\label{lem:mom}
Suppose that $\mathbb E(|X_{i_1}\cdots X_{i_r}|)<\infty$ for any choice of $i_1,\dots,i_r\in\{1,\dots,d\}$. Then $\D^{\otimes r} M_\bX (\bt) = \E \{ \exp(\bt^\top\bX) \bX^{\otimes r} \}$.
\end{lemma}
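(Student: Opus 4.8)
The plan is to prove the identity by induction on the order $r$, arranging matters so that each inductive step requires only a single interchange of a first-order differential operator with the expectation, which is the most delicate point.

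First I would record the pointwise differentiation rule. For each fixed realization of $\bX$, treating $\bX$ as a constant vector in the variable $\bt$, we have $\D\exp(\bt^\top\bX)=\bX\exp(\bt^\top\bX)$, since $\partial\exp(\bt^\top\bX)/\partial t_i=X_i\exp(\bt^\top\bX)$. Because $\bX^{\otimes(r-1)}$ does not depend on $\bt$, the constant-multiplication rule $\D(\ba h)=\ba\otimes\D h$ of Theorem~\ref{thm:deriv-const}(i) (with $r=1$) then gives $\D\{\bX^{\otimes(r-1)}\exp(\bt^\top\bX)\}=\bX^{\otimes(r-1)}\otimes\{\bX\exp(\bt^\top\bX)\}=\bX^{\otimes r}\exp(\bt^\top\bX)$.

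Next I would run the induction. The base case $r=0$ is the definition $M_\bX(\bt)=\E\{\exp(\bt^\top\bX)\}$, with $\bX^{\otimes 0}=1$. Assuming $\D^{\otimes(r-1)}M_\bX(\bt)=\E\{\exp(\bt^\top\bX)\bX^{\otimes(r-1)}\}$, the iterative identity $\D^{\otimes r}=\D(\D^{\otimes(r-1)})$ from Section~\ref{sec:prelim} yields $\D^{\otimes r}M_\bX(\bt)=\D\,\E\{\exp(\bt^\top\bX)\bX^{\otimes(r-1)}\}$. Interchanging $\D$ with $\E$ and substituting the pointwise rule above then gives $\D^{\otimes r}M_\bX(\bt)=\E\{\D[\exp(\bt^\top\bX)\bX^{\otimes(r-1)}]\}=\E\{\bX^{\otimes r}\exp(\bt^\top\bX)\}$, completing the induction.

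The hard part will be justifying the interchange of $\D$ and $\E$, i.e. differentiation under the expectation sign. I would argue this component-wise through the standard dominated-convergence criterion: each difference quotient of the integrand in a coordinate $t_i$ is bounded, uniformly for $\bt$ in a neighborhood of the point of differentiation, by an envelope of the form (an absolute product of $r$ coordinates of $\bX$) times $\exp(\bs^\top\bX)$, and the hypothesis $\E|X_{i_1}\cdots X_{i_r}|<\infty$ for every index tuple is exactly what makes such envelopes integrable. At $\bt=\bm 0$ this moment condition already guarantees that the right-hand side $\E\{\exp(\bt^\top\bX)\bX^{\otimes r}\}$ is finite; for a general $\bt$ in the interior of the set where $M_\bX$ is finite, the exponential factor $\exp(\bs^\top\bX)$ can additionally be dominated by a fixed integrable function, so the interchange is legitimate there as well.
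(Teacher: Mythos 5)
Your proof is correct, but it takes a genuinely different route from the paper's. The paper computes the $r$th vectorized derivative of the integrand $\exp(\bt^\top\bx)$ in a single stroke by applying Fa\`a di Bruno's formula (Theorem~\ref{thm:deriv-comp}(i)) to the composition $g\circ f$ with $g(t)=\exp(t)$ and $f(\bt)=\bt^\top\bx$: since $\D^{\otimes k}f=\bm 0$ for $k\geq 2$, only the multi-index with $m_1=r$ survives, $\pi_\bbm=1$, and the answer is $\exp(\bt^\top\bx)\,\S_{d,r}\bx^{\otimes r}=\exp(\bt^\top\bx)\,\bx^{\otimes r}$; a single interchange of $\D^{\otimes r}$ with $\E$ is then invoked with a citation to Billingsley. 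You instead induct on $r$ via the iterative identity $\D^{\otimes r}=\D(\D^{\otimes(r-1)})$ and the constant-multiplication rule of Theorem~\ref{thm:deriv-const}(i), so that each step involves only a first-order differential passed through the expectation. Your route is more elementary --- it avoids the chain-rule machinery entirely and the pointwise computation $\bX^{\otimes(r-1)}\otimes\{\bX\exp(\bt^\top\bX)\}=\bX^{\otimes r}\exp(\bt^\top\bX)$ is immediate --- whereas the paper's version doubles as an illustration of Fa\`a di Bruno's formula (it is deliberately parallel to the Hermite-polynomial calculation of Section~\ref{sec:hermite-faa}). On the analytic side you are in fact slightly more careful than the paper: you note explicitly that the stated moment condition handles the difference quotients at $\bt=\bm 0$ but that for general $\bt$ one must additionally dominate the factor $\exp(\bs^\top\bX)$, i.e.\ work in the interior of the set where $M_\bX$ is finite; the paper delegates all of this to the Billingsley reference. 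The one cosmetic cost of your approach is that the interchange must be justified $r$ times rather than once, but the domination argument is identical at each step, so nothing is lost.
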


\begin{proof}
The condition on the absolute mixed moments implies that we can change the order of differentiation and expectation \citep[see][Section 26]{B12}. So it suffices to find the derivative of $\alpha (\bt) = \exp (\bt^\top \bx) = (g \circ f) (\bt)$ where $g(t) = \exp(t)$ and $f(\bt) = \bt^\top \bx$. We have $\D^{\otimes r} g(t) = g(t)$ and $ \D^{\otimes r} g\{f (\bt)\} = \alpha(\bt)$  for all $r$. Besides, $\D f(\bt) = \bx$ and $\D^{\otimes  r} f(\bt) = \bm 0$ for all $r\geq 2$. Reasoning as for the Hermite polynomial calculation, the only required multi-index, which is a solution of $1\cdot m_1 + \dots + r \cdot m_r = r$, is $m_1=r, m_2 = \dots = m_r =0$.  Since the $\pi$ coefficient is $\pi_\bbm = r!/r! = 1$, then Theorem~\ref{thm:deriv-comp}(i) implies that
$\D^{\otimes r} \alpha(\bt)
= \alpha(\bt) \pi_{\bbm} \S_{d,r} \{\D f(\bt)\}^{\otimes m_1} = \exp( \bt^\top \bx) \S_{d,r} \bx^{\otimes r} =  \exp( \bt^\top \bx) \bx^{\otimes r}$,
as required.
\end{proof}

\end{document}